\documentclass[a4paper,10pt,reqno]{amsart}

\usepackage{amssymb}
\usepackage{mathrsfs}
\usepackage{enumerate}
\usepackage[hidelinks]{hyperref}
\usepackage{cleveref}
\usepackage{amsrefs}
\usepackage{amsthm}
\usepackage{tikz}

\numberwithin{equation}{section}

\newtheorem{theorem}{Theorem}[section]
\newtheorem{lemma}[theorem]{Lemma}
\newtheorem{proposition}[theorem]{Proposition}
\newtheorem{corollary}[theorem]{Corollary}
\newtheorem{conjecture}[theorem]{Conjecture}
\newtheorem{hypothesis}[theorem]{Hypothesis}

\theoremstyle{definition}
\newtheorem{definition}[theorem]{Definition}

\newcommand{\mc}[1]{\mathcal{#1}}

\newcommand{\supp}[1]{\operatorname{supp}(#1)}

\newcommand{\stack}[1]{\operatorname{stack}(#1)}
\newcommand{\clear}[1]{\operatorname{clear}(#1)}

\newcommand{\conf}[2]{\operatorname{Conf}_{#2}(#1)}
\newcommand{\pmove}[2]{#1\Rightarrow #2}
\newcommand{\pmapsto}[2]{#1\mapsto #2}
\newcommand{\Gmapsto}[3]{#3:\pmapsto{#1}{#2}}
\newcommand{\poper}[3]{#3{[\pmove{#1}{#2}]}}
\newcommand{\norm}[1]{\|#1\|}
\newcommand{\Pebl}[2]{\operatorname{Pebl}_{#2}{(#1)}}

\newcommand{\econf}[2]{c^{#1}_{#2}}
\newcommand{\pull}[1]{\operatorname{p}(#1)}
\newcommand{\push}[1]{\operatorname{s}(#1)}
\newcommand{\Leaf}{\operatorname{Leaf}}
\newcommand{\topp}[2]{#1\ll #2}

\author[T. Csern{\'a}k]{Tam{\'a}s Csern{\'a}k}
\address{University of Pannonia, Hungary}
\email{tamas@csernak.com}

\author[L. Soukup]{Lajos Soukup}
\address{HUN-REN R{\'e}nyi Institute of Mathematics, Budapest, Hungary}
\email{soukup@renyi.hu}

\title[The Almost Stacked Hypothesis]{The Almost Stacked Hypothesis: A
Conjectural Analogue of Sj\"ostrand's Cover Pebbling Theorem}
\date{\today}
\keywords{pebbling, clearing, stacking, Sj\"ostrand's cover pebbling theorem,
almost stacked}
\subjclass[2020]{Primary 05C57; Secondary 05C05, 05C38}

\begin{document}

\begin{abstract}
We study two graph pebbling parameters, the stacking number and the clearing
number, through the Almost Stacked Hypothesis (ASH). This hypothesis asserts
that these thresholds can be determined by testing only configurations in
which at most one vertex carries more than one pebble. We prove that every
almost stacked configuration of size $2^{n+1}-1$ on $C_{2n}$ is stackable and
that every almost stacked configuration of size $3\cdot2^n-2$ on $C_{2n+1}$
is clearable. Together with the known lower bounds from~\cite{CsSo26}, these
results show that ASH implies
\[
\stack{C_{2n}}=2^{n+1}-1
\qquad\text{and}\qquad
\clear{C_{2n+1}}=3\cdot2^n-2.
\]
For a finite tree $T$, we introduce an explicit invariant
$\operatorname{estim}(T)$. We prove unconditionally that
$\stack{T}\ge\operatorname{estim}(T)$ and prove the reverse inequality under
ASH\@. Consequently, ASH yields $\stack{T}=\operatorname{estim}(T)$, and we
conjecture that this equality holds unconditionally. Finally, we study
perfectly pebblable graphs: finite connected non-bipartite graphs whose
clearing number has the minimum possible value
\(\clear{G}=|V(G)|+1\). Every complete graph with at least three vertices is
perfectly pebblable, which might suggest that perfect pebblability requires
high edge density. Assuming ASH, however, we show that this is not the case.
We give a sufficient criterion involving strong edge-triangulation and
Hamiltonian-path and path-cover conditions in vertex-deleted subgraphs and use
it to construct two explicit infinite families of perfectly pebblable graphs
with edge density tending to zero, one of which has only a linear number of
edges.
\end{abstract}

\maketitle

\section{Introduction}

Graph pebbling starts with a finite connected graph whose vertices carry
pebbles. A single pebbling step along an edge removes two pebbles from one
endpoint and places one pebble on the other endpoint. In this paper we study
two ways in which such a process can succeed. A configuration is \emph{stacked}
if all remaining pebbles lie on one vertex, and it is \emph{cleared} if only
one pebble remains. The stacking and clearing numbers are the least integers
$t$ such that every configuration of $t$ pebbles is stackable or clearable,
respectively.

The Almost Stacked Hypothesis (ASH) is a proposed reduction principle: to
determine the stacking and clearing thresholds, it should be enough to check
almost stacked configurations, namely configurations in which at most one
vertex carries more than one pebble. We show that ASH gives exact formulas for
$\stack{C_{2n}}$ and $\clear{C_{2n+1}}$. For finite trees $T$, we introduce an
explicit invariant $\operatorname{estim}(T)$, prove unconditionally that
$\stack{T}\ge\operatorname{estim}(T)$, and prove the reverse inequality under
ASH\@. We conjecture that the resulting equality holds unconditionally.

A finite connected non-bipartite graph $G$ is called \emph{perfectly
pebblable} if its clearing number has the minimum possible value
$\clear{G}=|V(G)|+1$. Complete graphs with at least three vertices are
perfectly pebblable, which might suggest that this property requires high edge
density. Assuming ASH, however, we prove otherwise: we give a sufficient
criterion for perfect pebblability and use it to construct two sparse infinite
families of perfectly pebblable graphs, one of which has only a linear number
of edges.

\subsection{Background and relation to earlier work}

The classical pebbling number $\pi(G)$, introduced by Chung~\cite{Cu89}, is
the least integer $m$ such that, from every configuration of $m$ pebbles on
$G$, one can move a pebble to any prescribed vertex. Early foundational work
includes~\cites{Mo92,PaSn95}; for a modern overview of graph pebbling,
see~\cite{HuKe21}. The subject has developed in several directions, including
cover pebbling~\cites{CrCuFeHuPuSzTu05,Sj05} and Hurlbert's general framework
for target configurations~\cite{Hu13}. Related variants and graph classes
include cover rubbling~\cite{HaKe20} and pebbling in powers of
paths~\cite{AlHu23}.

We introduced the stacking and clearing numbers in~\cite{CsSo26}. As in
Hurlbert's framework, these parameters specify a class of admissible final
configurations. Their distinguishing feature is that the final vertex is not
prescribed: a successful stacking sequence may finish at any vertex, while a
successful clearing sequence may leave its last pebble anywhere. The Almost
Stacked Hypothesis studied here is directly analogous to Sj\"ostrand's cover
pebbling theorem~\cite{Sj05}, according to which the extremal initial
configuration for a positive demand function may be taken to be stacked.
Because the target vertex is not fixed in our setting, isolated pebbles away
from the main pile can create genuine obstructions; this leads naturally to
almost stacked, rather than fully stacked, extremal configurations.

Our interest in these questions began with Hurlbert's seminar
talk~\cite{Hurlbert2023}. The computational data associated with our earlier
work are available in~\cite{So26}; in particular, they verify the stacking
assertion of ASH for all connected graphs with at most seven vertices and its
clearing assertion for all non-bipartite such graphs.

\section{Basic Definitions}

Let $G$ be a finite simple graph.
A \emph{configuration} on $G$ is a function $c\colon V(G)\to\mathbb{N}$, where
$c(v)$ denotes the number of pebbles at $v$. Its \emph{size} is
\begin{equation*}
\norm{c}=\sum_{v\in V(G)}c(v),
\end{equation*}
and its \emph{support} is
\begin{equation*}
\supp{c}=\{v\in V(G):c(v)>0\}.
\end{equation*}
Let $\conf{G}{}$ denote the set of all configurations on $G$, and let
\begin{equation*}
\conf{G}{t}=\{c\in\conf{G}{}:\norm{c}=t\}.
\end{equation*}
For configurations $c,d\in\conf{G}{}$, define $c\oplus d$ by
\begin{equation*}
(c\oplus d)(v)=c(v)+d(v)\qquad \text{for all } v\in V(G).
\end{equation*}
For a nonnegative integer $m$, the configuration $m\cdot c$ is defined in the
natural way. If $d(v)\le c(v)$ for every $v\in V(G)$, then $c\ominus d$ denotes
the configuration defined by
\begin{equation*}
(c\ominus d)(v)=c(v)-d(v)\qquad\text{for all }v\in V(G).
\end{equation*}

For $v\in V(G)$, let $e_v=e_v^G$ denote the \emph{unit configuration} at $v$,
that is, the configuration defined by
\begin{equation*}
e_v(v)=1
\qquad\text{and}\qquad
e_v(w)=0 \ \text{for } w\in V(G)\setminus\{v\}.
\end{equation*}

If $c\in\conf{G}{}$ and $uv\in E(G)$ with $c(u)\ge2$, then the pebbling step
$\pmove{u}{v}$ is \emph{applicable} to $c$. Its result is the configuration
$c'=\poper{u}{v}{c}$ defined by
\begin{equation*}
c'(u)=c(u)-2,\qquad
c'(v)=c(v)+1,\qquad
c'(w)=c(w)\ \text{for } w\notin\{u,v\}.
\end{equation*}
For configurations $c,c'\in\conf{G}{}$ we write
\begin{equation*}
\pmapsto{c}{c'}
\end{equation*}
to mean that $c'$ is reachable from $c$ by a sequence of pebbling steps. When
the graph needs to be indicated explicitly, we write $\Gmapsto{c}{c'}{G}$. We
also define
\begin{equation*}
\Pebl{c}{G}=\{c'\in\conf{G}{}:\pmapsto{c}{c'}\}.
\end{equation*}
A configuration is called \emph{terminal} if no pebbling step is applicable
to it, equivalently, if it has at most one pebble at each vertex.

\begin{definition}[Stacking and clearing]
A configuration is \emph{stacked} if its support has size $1$, and it is
\emph{stacked at $v$} if $\supp{c}=\{v\}$. A configuration is \emph{cleared}
if its size is $1$. A configuration $c$ is \emph{stackable} if some stacked
configuration lies in $\Pebl{c}{G}$; it is \emph{stackable at $v$} if some
configuration stacked at $v$ lies in $\Pebl{c}{G}$. It is \emph{clearable} if
some cleared configuration lies in $\Pebl{c}{G}$.

Following~\cite{CsSo26}, we define the stacking number only for connected
graphs and the clearing number only for connected non-bipartite graphs. If
$G$ is connected, then the \emph{stacking number} $\stack{G}$ is the least
integer $t\ge2$ such that every configuration in $\conf{G}{t}$ is stackable.
If $G$ is connected and non-bipartite, then the \emph{clearing number}
$\clear{G}$ is the least integer $t\ge2$ such that every configuration in
$\conf{G}{t}$ is clearable.
\end{definition}

\section{The Almost Stacked Hypothesis}

\begin{definition}[Almost stacked configurations]
Let $G$ be a graph. A configuration $c$ on $G$ is called
\emph{almost stacked} if there exists a vertex $v_0\in V(G)$ such that
\begin{equation*}
c(u)\le 1 \qquad \text{for all } u\in V(G)\setminus\{v_0\}.
\end{equation*}
We then say that $c$ is \emph{almost stacked at $v_0$}.
\end{definition}

\begin{hypothesis}[Almost Stacked Hypothesis]\label{HY_almoststacked}
Let $G$ be a finite connected graph.
\begin{enumerate}[(1)]
\item $\stack{G}$ is the least integer $t\ge2$ such that every almost stacked
configuration in $\conf{G}{t}$ is stackable.
\item If $G$ is non-bipartite, then $\clear{G}$ is the least integer $t\ge2$
such that every almost stacked configuration in $\conf{G}{t}$ is clearable.
\end{enumerate}
\end{hypothesis}

\subsection{Intuition}

The Almost Stacked Hypothesis is motivated by the heuristic that extremal
counterexamples should have at most one vertex carrying more than one pebble.
A pair of pebbles away from a distinguished vertex permits an immediate move
and hence provides local flexibility, suggesting that such a pair should not
make a configuration harder to stack or clear. The expected obstructions are
the isolated single pebbles distributed through the graph: they record the
residues that must be absorbed, pushed forward, or cleared. Thus ASH asserts
that, when determining the stacking and clearing thresholds, it suffices to
consider configurations consisting of one distinguished vertex, possibly
carrying a large pile, together with a $0$--$1$ configuration on the remaining
vertices.

This point of view is directly analogous to Sj\"ostrand's cover pebbling
theorem, where the worst configurations may be taken to be stacked. In the
present setting the target vertex is not fixed, and single pebbles away from
the main pile can affect the parity and divisibility constraints of a pebbling
sequence. Thus the natural analogue of a stacked extremal configuration is not
a fully stacked configuration, but an almost stacked one: one distinguished
vertex that may carry a large pile, and at most one pebble on every other
vertex.

For cycles this intuition becomes especially concrete. Once the distinguished
vertex is fixed as $v_1$, the two directions around the cycle determine two
path-like arms starting at $v_1$. The $0$--$1$ patterns on these arms determine
how much fuel is needed to pull isolated pebbles back to the distinguished
vertex or to push a pebble forward while clearing the intervening vertices.
This is exactly what the Pullback and Pushforward Lemmas measure.

\section{Cycle Applications}

\subsection{Background results}

The following background results are taken from~\cite{CsSo26}; their precise
locations are indicated in the headings.

\begin{theorem}[Complete graphs, {\cite{CsSo26}*{Theorem~8.1}}]\label{tm:complete-graph}
If $n\ge 3$, then
\begin{equation*}
\clear{K_n}=\stack{K_n}=n+1.
\end{equation*}
\end{theorem}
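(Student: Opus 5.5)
The plan has two halves: a lower bound $\clear{K_n}\ge n+1$ and $\stack{K_n}\ge n+1$ from one explicit configuration of size $n$, and a matching upper bound showing that every configuration of size $n+1$ on $K_n$ is clearable and stackable.

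\textbf{Lower bound.} Take the configuration with one pebble on each of $n-1$ vertices and two pebbles on a vertex $u$. It has size $n+1$, so it is not the right candidate; we need a size-$n$ configuration that is neither stackable nor clearable. The natural choice is the all-ones configuration, which has size $n$. It is terminal, its support has $n\ge3$ vertices, and its size is $n>1$, so it is neither stacked nor cleared. Since no move applies, it is neither stackable nor clearable. This gives $\clear{K_n},\stack{K_n}\ge n+1$.

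\textbf{Upper bound.} Let $c$ have size $t\ge n+1$; we prove by induction on $t$ that $c$ is stackable and clearable.

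First suppose some vertex is empty. Pick $u$ with $c(u)\ge2$ (by pigeonhole) and an empty vertex $w$, and perform $\pmove{u}{w}$. The size drops by one, and we reduce to the following statement, which we prove as the real invariant. In $K_n$, any configuration of size $s$ with $|\supp{c}|+$(excess) large enough can be driven to a single vertex. This is cleaner handled directly, as follows.

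Greedy procedure for stacking. Choose a target vertex $v$ with $c(v)\ge1$, preferring one with $c(v)\ge 2$ if possible. While some vertex $u\ne v$ has $c(u)\ge2$, move $\pmove{u}{v}$. This terminates with every $u\ne v$ carrying at most one pebble. Then use the pile at $v$ to absorb the singletons. For a singleton at $x$, first move $\pmove{v}{x}$ to make two pebbles there, then move $\pmove{x}{v}$. This costs net $1$ pebble from $v$ per singleton. We need the count to show that $v$ always has at least $2$ when needed. Note that each pebbling move preserves $\norm{c}$ minus the number of moves made.

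The accounting works as follows. If the number of singletons is $k\le n-1$ and $c(v)=p$, then by the invariant argument one has $p+k\ge$ (original size) $-$ (moves) and the weight count gives $p\ge k+1$ initially. After clearing $k$ singletons, $v$ still holds at least $1$ pebble, so the result is stacked.

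Clearing follows from stacking. Once the pebbles are stacked at $v$ with $p\ge 2$, use a triangle $v,x,y$ (this exists since $n\ge3$). From $p$ pebbles on $v$, halve repeatedly while alternating between vertices, which reduces to $1$ pebble. For example, $2\to1$ via $\pmove{v}{x}$. In general, $p$ pebbles on one vertex of a triangle can be reduced to $1$ pebble: move pairs to $x$, then from $x$ to $y$, and so on, since odd cycles fix parity problems.

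\textbf{Main obstacle.} The delicate step is the bookkeeping in the stacking phase. I must show that the greedy collection leaves $p\ge k+1$, where $k$ is the number of leftover singletons, using only $t\ge n+1$. I would handle this by choosing the order of moves carefully (always into $v$). If it fails, I would instead argue by a potential function, $\norm{c}-|\supp{c}|$, which is at least $1$ initially and does not decrease under the chosen moves.
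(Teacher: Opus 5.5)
The paper imports this theorem from earlier work without proof, so I am judging your argument on its own terms. The upper bound has two genuine gaps.

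\textbf{Stacking.} The issue you flag as the main obstacle is not a bookkeeping problem: the inequality $p\ge k+1$ is false. Take the configuration with two pebbles on $v$ and one on every other vertex, which has size $n+1$. Your greedy phase does nothing there, so $p=2$ and $k=n-1\ge2$. After one round trip $\pmove{v}{x},\pmove{x}{v}$, the vertex $v$ holds one pebble while $n-2$ singletons remain. That is a terminal, unstacked configuration.

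The fallback potential $\norm{c}-|\supp{c}|$ does not rescue this. On a configuration almost stacked at $v$ it equals $p-1$, whereas you need $p-1\ge k$, not $p-1\ge1$. It also drops by one under every round trip, and under every move out of a vertex holding at least three pebbles into an occupied vertex.

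The missing idea is to carry the pair forward rather than back:
\begin{itemize}
\item Use round trips only while $p\ge3$.
\item Once $p=2$, walk the pair through the remaining singletons, $\pmove{v}{x_1},\pmove{x_1}{x_2},\dots$; any order works because $K_n$ is complete.
\item Each move empties one vertex, and the walk ends with two pebbles on the last vertex, which is stacked.
\end{itemize}
This only needs $p\ge2$, and your choice of $v$ (possible by pigeonhole) guarantees that. It is the Hamiltonian-path mechanism of Proposition~\ref{pr:perfect-Hamiltonian}.

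\textbf{Clearing.} Your claim that a stack on a vertex of a triangle can always be reduced to one pebble is false for a stack of three. From $3e_v$, every move produces the terminal configuration $e_v\oplus e_x$. Your procedure does reach such stacks. On $K_3$ it sends $2e_u\oplus2e_v$ to a stack of three, even though $2e_u\oplus2e_v$ is clearable by $\pmove{u}{w},\pmove{v}{w},\pmove{w}{u}$.

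So ``clearing follows from stacking'' needs the argument of Lemma~\ref{lm:edge-triangulated}:
\begin{itemize}
\item Reduce any stack by threes to size $1$, $2$ or $3$.
\item In the last case, back up one move. Since $\norm{c}\ge n+1\ge4$, the configuration just before the final move must be $2e_u\oplus2e_v$.
\item Clear that configuration through a common neighbour, which exists in $K_n$ for $n\ge3$.
\end{itemize}
Stacks of every other positive size at a vertex of a triangle are indeed clearable, but size $3$ is exactly the case that must be handled.

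\textbf{Lower bound (minor).} $\stack{K_n}$ and $\clear{K_n}$ are the \emph{least} $t\ge2$ with the universal property. So you need a bad configuration at every size $2\le t\le n$, not only at $t=n$. Placing single pebbles on $t$ distinct vertices works, as in \eqref{eq:stack-order-lower}.
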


\begin{definition}[Extremal path configurations, {\cite{CsSo26}*{Definition~7.2}}]\label{df:path-conf}
Let $n\ge2$ and $m\in\mathbb{N}$, and let $P_n$ have vertices
$v_1,\dots,v_n$ in natural order. We denote by $\econf{m}{n}$ the configuration
on $P_n$ with $m$ pebbles on $v_1$, one pebble on $v_n$, and no pebbles on the
remaining vertices.
Equivalently,
\begin{equation*}
\econf{m}{n}=m\cdot e_{v_1}\oplus e_{v_n}.
\end{equation*}
\end{definition}

\begin{theorem}[Path lower-bound configuration, {\cite{CsSo26}*{Theorem~7.3}}]\label{tm:path-lower}
Let $n\ge2$, and let $m<2^n$ with $m\equiv 2^n\pmod 3$.
Then the configuration $\econf{m}{n}$ is not stackable.
\end{theorem}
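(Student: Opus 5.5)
The plan combines a mod-$3$ invariant with a flow (move-counting) analysis along the path. First, I will find an invariant that forces the size of any reachable stacked configuration to be a multiple of $3$. For a configuration $d$ on $P_n$ put
\[
\Phi(d)=\sum_{i=1}^n (-1)^i d(v_i)\pmod 3 .
\]
A step $\pmove{v_i}{v_{i\pm1}}$ changes $\Phi$ by $-2(-1)^i+(-1)^{i\pm1}=-3(-1)^i\equiv 0$, so $\Phi$ is invariant under pebbling. For $\econf{m}{n}$ we get $\Phi=-m+(-1)^n\equiv -2^n+(-1)^n\equiv 0 \pmod 3$. A configuration stacked at $v_j$ with $k$ pebbles has $\Phi\equiv\pm k$. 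Hence any stacked configuration reachable from $\econf{m}{n}$ has size $k\equiv 0\pmod 3$, and in particular $k\ge 3$. It therefore suffices to show that, when $m<2^n$, no configuration with at least $3$ pebbles on a single vertex and nothing elsewhere is reachable.

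Second, I will describe a hypothetical pebbling sequence by move counts. Let $a_i$ and $b_i$ be the numbers of steps $\pmove{v_i}{v_{i+1}}$ and $\pmove{v_{i+1}}{v_i}$. By the standard no-cycle lemma for pebbling (a sequence realizing a given target can be chosen with an acyclic move digraph), we may assume $a_ib_i=0$ for every edge. The final configuration $d$ then satisfies the balance equations
\[
d(v_i)=c(v_i)-2a_i-2b_{i-1}+a_{i-1}+b_i ,
\]
with $c=\econf{m}{n}$. If the target is stacked at $v_j$, then on each side of $v_j$ all flows point toward $v_j$, except that the lone pebble at $v_n$ may force flow toward $v_n$.

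Third, I will split into cases according to $j$ and solve these linear recursions from the ends of the path inward.

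\emph{Case $j=n$.} All flow goes rightward. Solving from $v_1$ gives $d(v_n)\le \lfloor m/2^{n-1}\rfloor+1\le 2$, since $m<2^n$. This contradicts $d(v_n)\ge3$.

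\emph{Case $j<n$.} The pebble on $v_n$ must be removed, but since $d(v_n)=0$ it cannot be pushed leftward without first adding a pebble to it. Thus either $v_n$ ends empty after receiving an odd number of pebbles, or flow must reach $v_n$ from $v_1$. Solving the recursion with $d(v_i)=0$ for $i\ne j$ determines all $a_i,b_i$ as explicit functions of $m$ and $k$. The integrality and nonnegativity constraints on $(a_i)$ and $(b_i)$ then force $m\ge 2^{j-1}k-\text{(contribution of } v_n)$, or force that a pebble is sent all the way to $v_n$, which costs at least $2^{n-1}$ pebbles from $v_1$ in addition to those needed at $v_j$.

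I expect the main obstacle to be this bookkeeping when $1\le j<n$: one must show that the extra fuel needed to absorb the pebble at $v_n$, together with $k\ge3$, pushes the requirement to $m\ge 2^n$. My first attempt will be the weight function $\sum_i d(v_i)2^{-|i-j|}$, which is nonincreasing and strictly decreases by at least $1/2$ per unit of flow directed away from $v_j$. I will combine it with the parity observation that $v_n$ can only lose its pebble after receiving an odd number of pebbles.
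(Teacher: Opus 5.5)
Your mod-$3$ invariant is correct and is the right first step. Since $(-1)^i\equiv2^i\pmod 3$, $\Phi$ is, up to sign, the weight $w_R$ with $R(v_i)=2^{i-1}$ reduced modulo $3$, and it forces $k\ge3$. The gap is in what follows.

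You cannot assume $a_ib_i=0$. The no-cycle lemma is about reaching a configuration that \emph{dominates} a target. Deleting a $2$-cycle $\pmove{v_i}{v_{i+1}}$, $\pmove{v_{i+1}}{v_i}$ leaves an extra pebble on both $v_i$ and $v_{i+1}$, so the result is no longer stacked. For instance, $3e_{v_1}\oplus e_{v_2}$ on $P_2$ is stackable at $v_1$, but only by using both directions of the edge. In your setting the assumption is self-defeating. For $j<n$ the balance equation at $v_n$ reads $0=1-2b_{n-1}+a_{n-1}$, so $a_{n-1}$ is odd and $b_{n-1}\ge1$. With $a_ib_i=0$ imposed, every $j<n$ would therefore be excluded for every $m$, which the example above refutes. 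Your bound $d(v_n)\le\lfloor m/2^{n-1}\rfloor+1$ for $j=n$ is true, but it comes from the monotonicity of $w_R$, not from one-directional flow.

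The case $j<n$, which is the real content of the theorem, is left as unspecified bookkeeping, and neither tool you offer closes it. Under $\sum_i d(v_i)2^{-|i-j|}$, a move from distance $t$ to $t+1$ from $v_j$ loses only $3\cdot2^{-t-1}$, which is below $1/2$ once $t\ge2$. With the readily justified count of one outward move per edge between $v_j$ and $v_n$, you only obtain $m\ge3\cdot2^j-2^{2j-n+1}$. Your fallback estimate, $2^{n-1}$ to reach $v_n$ on top of $3\cdot2^{j-1}$ at $v_j$, fares no better. Both stay below $2^n$ once $j\le n-2$. What is missing is the cost of refilling the vertices between $v_j$ and $v_n$ before the pebble at $v_n$ can be rolled back.

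The missing idea is to weight from the pile rather than from the target, exactly as in the proof of Theorem~\ref{tm:tree}. This statement is essentially the path case of that argument: for $T=P_n$ and $r=v_1$ the configuration built there is $\econf{2^n-3}{n}$. With $R(v_i)=2^{i-1}$, moves toward $v_n$ preserve $w_R$, and each move $\pmove{v_{i+1}}{v_i}$ lowers it by exactly $3\cdot2^{i-1}$. For each $i$ with $j\le i<n$, the segment $\{v_{i+1},\dots,v_n\}$ starts nonempty and ends empty. A move into it or inside it never leaves it empty, so the move that first empties it is $\pmove{v_{i+1}}{v_i}$. Since $m\equiv2^n\pmod3$ and $m<2^n$ give $m\le2^n-3$, summing these losses yields $k\,2^{j-1}\le m+2^{n-1}-3(2^{n-1}-2^{j-1})\le3\cdot2^{j-1}-3$, contradicting $k\ge3$. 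For $j=n$ the sum of losses is empty and the same inequality applies. No flow normalization and no case-by-case recursion are needed.
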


\begin{theorem}[Stacking number of paths, {\cite{CsSo26}*{Theorem~7.4}}]\label{tm:pn}
For every $n\ge 2$,
\begin{equation*}
\stack{P_n}=2^n-1.
\end{equation*}
\end{theorem}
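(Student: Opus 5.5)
The statement has two halves. The lower bound $\stack{P_n}\ge 2^n-1$ follows from Theorem~\ref{tm:path-lower}; the upper bound, that every configuration in $\conf{P_n}{2^n-1}$ is stackable, needs a reduction to almost stacked configurations followed by a direct analysis.

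\textbf{Lower bound.} Take $m=2^n-3$. Then $m<2^n$ and $m\equiv 2^n\pmod 3$. By Theorem~\ref{tm:path-lower}, $\econf{m}{n}$ is not stackable, and $\norm{\econf{m}{n}}=m+1=2^n-2$. One caution: $\stack{P_n}$ is defined as the least $t\ge2$ for which \emph{every} configuration of size $t$ is stackable, and stackability need not be monotone in size. So I would also exhibit, for each $t\le 2^n-3$, a non-stackable configuration of size $t$. Configurations $\econf{m'}{k}$ on a sub-path with the right residue modulo $3$, or a $0$--$1$ configuration with two isolated pebbles, look adequate. If that case analysis is unpleasant, I would fall back on whatever convention~\cite{CsSo26} uses for the threshold.

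\textbf{Upper bound, plan.} I would prove a stronger statement by induction on $n$: every configuration on $P_n$ of size $t\ge 2^n-1$ is stackable. The base case $n=2$ is a direct check on sizes $\ge3$. Repeatedly moving pairs from the heavier side reaches a stacked configuration; the parity cases close because with three or more pebbles one can always shift so that one vertex is emptied.

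For the inductive step, use the weight $w(v_i)=2^{n-i}$, which does not increase under pebbling steps toward $v_1$.
\begin{enumerate}[(i)]
\item Sweep from $v_n$ toward $v_1$: at each vertex $v_i$, $i\ge2$, move $\lfloor c(v_i)/2\rfloor$ pairs to $v_{i-1}$. This leaves an almost stacked configuration at $v_1$, with a $0$--$1$ residue set $S\subseteq\{v_2,\dots,v_n\}$ and a pile $p$ at $v_1$.
\item A counting estimate gives $p+\sum_{v_i\in S}2^{i-1}\ge \norm{c}$ in the appropriate weighted sense. Hence $p$ is large relative to the residues.
\item If $v_n\notin S$, the configuration lives on $P_{n-1}$ with at least $2^{n-1}-1$ pebbles, and the induction hypothesis finishes.
\item If $v_n\in S$, I would instead try to stack at $v_n$. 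Push pebbles from $v_1$ rightward, absorbing each residue in $S$ as the moving pile passes it. Reaching $v_n$ and doubling up there costs roughly $2^{n-1}$ pebbles. Alternatively, redo the sweep in the opposite direction so that the residue lands at $v_1$, where it can be pulled back.
\end{enumerate}

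\textbf{Main obstacle.} The hard part is the almost stacked case with a residue at the far endpoint. This is exactly the extremal pattern $\econf{m}{n}$, where parity and divisibility by $3$ determine success. I would first establish an explicit lemma characterising which $m$ make $\econf{m}{n}$ stackable: all $m\ge 2^n-2$, with the others governed by $m\bmod 3$. Next, show that extra residues in $S$ only make the configuration easier. They can be absorbed by the travelling pile, which changes its parity and so lets one steer around the bad residue class. The delicate bookkeeping is to choose the sweep direction, toward $v_1$ or toward $v_n$, so that the resulting pile avoids the forbidden residue modulo $3$. I would handle this by comparing the two sweeps and showing they cannot both land in the bad class when $\norm{c}\ge 2^n-1$.
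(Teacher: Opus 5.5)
The paper does not prove this statement; it quotes it from \cite{CsSo26}, so your proposal has to stand on its own. Your lower bound is fine, and the non-monotonicity worry needs no case analysis. For $2\le t\le n$, $0$--$1$ configurations are terminal and not stacked. For $n<t\le 2^n-2$, Lemma~\ref{lm:threshold-monotonicity} would carry universal stackability from size $t$ up to size $2^n-2$, contradicting Theorem~\ref{tm:path-lower} for $\econf{2^n-3}{n}$. This is exactly how the paper finishes the tree lower bound.

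The upper bound has a genuine gap: the greedy sweep in step (i) does not preserve stackability, and the estimates in (ii)--(iii) are false.

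\emph{A single sweep can destroy stackability.} Take $c=e_{v_2}\oplus14e_{v_4}$ on $P_4$, of size $15$. The sweep toward $v_1$ produces $2e_{v_1}\oplus e_{v_3}$. Here $v_4\notin S$, but only $3<2^3-1$ pebbles survive, so your induction hypothesis is unavailable. In fact this configuration is not stackable: its only move gives $e_{v_2}\oplus e_{v_3}$. Yet $c$ is stackable: two moves $\pmove{v_4}{v_3}$, then $\pmove{v_3}{v_2}$, $\pmove{v_2}{v_3}$, $\pmove{v_4}{v_3}$, $\pmove{v_3}{v_4}$, reach $9e_{v_4}$. The natural invariant $\sum_i2^{n-i}c(v_i)$ is preserved by the sweep, but it only yields $p\ge1$, or $p\ge2$ when $v_n\notin S$. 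It does not give a large pile.

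\emph{Choosing the better sweep direction does not help.} Take $c=e_{v_1}\oplus5e_{v_2}\oplus e_{v_3}$ on $P_3$, of size $7$. The two sweeps give $3e_{v_1}\oplus e_{v_2}\oplus e_{v_3}$ and its mirror image. Both are non-stackable: the only move leads to $e_{v_1}\oplus2e_{v_2}\oplus e_{v_3}$, and each of its two continuations is forced into a terminal two-pebble configuration. Nevertheless $c$ reaches $3e_{v_3}$ via $\pmove{v_2}{v_1}$, $\pmove{v_2}{v_3}$, $\pmove{v_1}{v_2}$, $\pmove{v_2}{v_3}$. So your claim that the two sweeps cannot both land in the bad class is false. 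No argument that first sweeps greedily and then analyzes the resulting almost stacked configuration can work.

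A smaller point: below $2^n-2$, stackability of $\econf{m}{n}$ is not governed by $m\bmod 3$ alone. For example, $\econf{3}{3}$ is not stackable although $3\not\equiv 2^3\pmod 3$.

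What is missing is an argument valid for arbitrary configurations. Reducing them to almost stacked ones is in the spirit of what the paper can only postulate as ASH, and it cannot be obtained from a fixed greedy sweep. A direct induction on $n$ is more promising:
\begin{enumerate}[(a)]
\item If an endpoint is empty, the induction hypothesis applies at once.
\item If $c(v_1)>0$ and $c(v_n)$ is even and positive, send all pebbles at $v_n$ to $v_{n-1}$. This loses at most $(\norm{c}-1)/2$ pebbles and leaves at least $2^{n-1}$ on $P_{n-1}$.
\end{enumerate}
The real work is the case where both endpoint piles are odd, and your plan does not address it.
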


\begin{corollary}[Even-cycle lower bound, {\cite{CsSo26}*{Corollary~7.5}}]\label{cr:stack_c_2n}
For every $n\ge2$,
\begin{equation*}
\stack{C_{2n}}\ge 2^{n+1}-1.
\end{equation*}
\end{corollary}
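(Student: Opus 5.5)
The plan is to exhibit a non-stackable configuration on $C_{2n}$ of size $2^{n+1}-2$ by folding the cycle onto a path and invoking Theorem~\ref{tm:path-lower}. Label the cycle $v_1,\dots,v_{2n}$ and let $P_{n+1}$ have vertices $p_1,\dots,p_{n+1}$. Define $f\colon V(C_{2n})\to V(P_{n+1})$ by $f(v_i)=p_{1+d(v_1,v_i)}$, so that $v_i$ and $v_{2n+2-i}$ both go to $p_i$ and the antipode $v_{n+1}$ goes to $p_{n+1}$. Since the cycle is even, the distance from $v_1$ changes by exactly $\pm1$ along every edge. Hence $f$ is a graph homomorphism $C_{2n}\to P_{n+1}$.

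Next I will push configurations forward along $f$ by setting $f_*c(p)=\sum_{f(u)=p}c(u)$. A pebbling step $\pmove{u}{v}$ applicable to $c$ maps to the step $\pmove{f(u)}{f(v)}$. This step is applicable to $f_*c$ because $f_*c(f(u))\ge c(u)\ge2$, and $f(u)f(v)$ is an edge of $P_{n+1}$. By induction on the length of the sequence, $\pmapsto{c}{c'}$ on $C_{2n}$ implies $\pmapsto{f_*c}{f_*c'}$ on $P_{n+1}$. Moreover, if $c'$ is stacked at $w$, then $f_*c'$ is stacked at $f(w)$, since pebbling never removes the last pebble.

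Now take $c=(2^{n+1}-3)\cdot e_{v_1}\oplus e_{v_{n+1}}$, which has size $2^{n+1}-2$. Then $f_*c=\econf{m}{n+1}$ with $m=2^{n+1}-3$. Here $m<2^{n+1}$ and $m\equiv 2^{n+1}\pmod 3$, so Theorem~\ref{tm:path-lower} (applied with $n+1\ge 2$) says $f_*c$ is not stackable. By the previous paragraph, $c$ is not stackable on $C_{2n}$.

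The step I expect to be the main obstacle is passing from ``some configuration of size $2^{n+1}-2$ is unstackable'' to $\stack{C_{2n}}\ge 2^{n+1}-1$. The stacking number is defined as the least $t\ge 2$ for which every size-$t$ configuration is stackable, so I must also rule out every $t$ with $2\le t\le 2^{n+1}-2$.
\begin{itemize}
\item My first attempt is to produce, for each such $t$, an unstackable configuration of size $t$ by the same folding trick with a shorter target distance $k\le n$. I would take $t-1$ pebbles on $v_1$ and one pebble at distance $k$, and fold onto $P_{k+1}$ with a modified map that sends vertices beyond distance $k$ back toward $p_k$. This covers the cases $t-1\equiv 2^{k+1}\pmod 3$ with $t-1<2^{k+1}$.
\item For the residue $t-1\equiv0\pmod 3$ this fails. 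There I would either add a second isolated pebble and argue parity directly on a short path, or prove a monotonicity lemma. The lemma would say: if every configuration of size $t$ is stackable, then so is every configuration of size $t+1$. To prove it, I would remove a well-chosen pebble, stack the rest, and then try to absorb the leftover pebble.
\end{itemize}
The absorption step is where I would be least certain, and the first thing I would check is whether~\cite{CsSo26} already supplies such monotonicity.
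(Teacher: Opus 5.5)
Your folding step is correct. Since $C_{2n}$ is bipartite, $v\mapsto p_{1+\operatorname{dist}(v_1,v)}$ is a homomorphism onto $P_{n+1}$. The pushforward carries applicable moves to applicable moves and stacked configurations to stacked ones, and $f_*c=\econf{2^{n+1}-3}{n+1}$ is unstackable by Theorem~\ref{tm:path-lower}. So you do obtain an unstackable configuration of size $2^{n+1}-2$.

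The gap is the passage to the threshold, which you leave open, and the route you sketch for it would not work. Deleting a pebble, stacking the rest, and then absorbing the leftover fails for two reasons. First, the hypothesis only guarantees that \emph{some} stacked configuration is reachable, possibly a single pebble at a vertex far from the deleted one. Second, absorbing an isolated pebble is exactly the obstruction in this subject: your own $c$ is a large pile plus one stray pebble that cannot be absorbed. Your first bullet, as you note, misses a residue class.

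The missing idea is to lower the size by a pebbling move rather than by deleting a pebble. If $t>|V(G)|$, then every configuration of size $t+1$ has a vertex carrying at least two pebbles, so one move reaches a configuration of size $t$. Hence stackability of all size-$t$ configurations propagates upward; this is Lemma~\ref{lm:threshold-monotonicity}. For $2\le t\le|V(G)|$ your proposed lemma holds vacuously, since one pebble on each of $t$ distinct vertices is terminal and unstacked. In particular $\stack{C_{2n}}\ge 2n+1$.

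Now suppose $\stack{C_{2n}}\le 2^{n+1}-2$. Then $\stack{C_{2n}}>|V(C_{2n})|$, and the lemma makes every configuration of size $2^{n+1}-2$ stackable, contradicting your $c$. This is how the paper makes the same passage at the end of the proof of Theorem~\ref{tm:tree}. No residue analysis for smaller $t$ is needed.
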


\begin{corollary}[Odd-cycle clearing lower bound, {\cite{CsSo26}*{Corollary~6.3}}]\label{cr:clear-odd-cycle-lower}
For every $n\ge1$,
\begin{equation*}
\clear{C_{2n+1}}\ge 3\cdot 2^n-2.
\end{equation*}
\end{corollary}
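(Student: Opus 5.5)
The plan is to exhibit an explicit configuration of size $3\cdot2^n-3$ on $C_{2n+1}$ that is not clearable. Label the vertices $v_0,v_1,\dots,v_{2n}$ cyclically. The two vertices farthest from $v_0$ are $v_n$ and $v_{n+1}$, both at distance $n$. I would take
\[
c=(3\cdot2^n-5)\cdot e_{v_0}\oplus e_{v_n}\oplus e_{v_{n+1}},
\]
which has size $3\cdot 2^n-3$. For $n=1$ this is the configuration with one pebble on each vertex of $C_3$. That configuration is terminal, has three pebbles, and is therefore not clearable, which matches $\clear{C_3}=4=\clear{K_3}$ from Theorem~\ref{tm:complete-graph}. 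If the general case fails for this family, I would adjust the pile size within its residue class, in the spirit of the condition $m\equiv 2^n\pmod 3$ in Theorem~\ref{tm:path-lower}.

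\textbf{Step 1: normalise clearing sequences.} Suppose a sequence of pebbling steps clears $c$. First I would normalise it, using the standard rearrangement arguments for pebbling sequences: no edge is used in both directions unnecessarily, and each isolated pebble is absorbed exactly once. For the last pebble to survive, the two isolated pebbles on $v_n$ and $v_{n+1}$ must each be consumed, since every pebble is either moved or is the final one. A pebble can be consumed only when a second pebble reaches its vertex. That second pebble has to come from the pile, along one of the two arms of the cycle.

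\textbf{Step 2: cut the cycle into paths.} By symmetry, the sequence induces a flow that reaches the antipodal edge $v_nv_{n+1}$ from one side or from both sides. In each case I would cut the cycle into one or two paths starting at $v_0$. On each path, the pebbles sent from $v_0$ form a configuration of the type $\econf{m}{k}$. The aim is then to apply the arithmetic of Theorem~\ref{tm:path-lower} and Theorem~\ref{tm:pn}. Reaching and merging with a single pebble at distance $n$ and leaving nothing behind costs a specific number of pebbles modulo $3$ and modulo powers of $2$. Summing over the two arms should show that exactly clearing both isolated pebbles together with the pile needs at least $3\cdot2^n-4$ pebbles on $v_0$. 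The intended count is $2^n$ to reach $v_n$, $2^{n-1}$ for the second arm, and so on. This exceeds $3\cdot2^n-5$.

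\textbf{Step 3: rule out the remaining configurations.} Finally I would check that every configuration reachable from $c$ and supported on a short arc has size at least $2$ and at most one pebble per vertex, so it is terminal but not cleared. This would show that $c$ is not clearable, and hence $\clear{C_{2n+1}}\ge 3\cdot2^n-2$.

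The main obstacle is Step 2. The final pebble's location is not prescribed, so a weight-function argument towards a fixed target is useless: the weight at $v_0$ is enormous. The obstruction is really an arithmetic, residue-type invariant. I would first try to define, for each arm, a $\mathbb{Z}$-valued invariant of the form $\sum_v c(v)(-2)^{-d(v,v_0)}$ scaled by $2^n$ and taken modulo $3$, as in the proof of Theorem~\ref{tm:path-lower}. I would then show that this invariant is preserved by the moves and that its initial value for $c$ is incompatible with every cleared configuration.
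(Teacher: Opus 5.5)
Your witness is clearable for every $n\ge2$, so the plan cannot be completed. In particular, the count in Step~2 (that clearing needs at least $3\cdot2^n-4$ pebbles on $v_0$) is false. Put $A=2^n$. For $n=2$, the configuration $7e_{v_0}\oplus e_{v_2}\oplus e_{v_3}$ on $C_5$ is cleared by
\[
\pmove{v_0}{v_4},\ \pmove{v_0}{v_4},\ \pmove{v_4}{v_3},\ \pmove{v_3}{v_2},\ \pmove{v_2}{v_1},\ \pmove{v_0}{v_1},\ \pmove{v_1}{v_0},\ \pmove{v_0}{v_1}.
\]
In general your configuration falls under Case~3 ($n$ even) or Case~4 ($n$ odd) of the proof of Theorem~\ref{tm:clearoddcircuit}, and those strategies already succeed at size $3A-3$.
\begin{itemize}
\item For even $n$, spend $A$ root pebbles to put a second pebble on $v_{n+1}$ along $v_0,v_{2n},\dots,v_{n+1}$. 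Then play $\pmove{v_{n+1}}{v_n}$ and $\pmove{v_n}{v_{n-1}}$, and pull the pebble on $v_{n-1}$ back for $A-2$ root pebbles (Lemma~\ref{lm:pullback}). This leaves $(A-2)e_{v_0}$ with $A-2\equiv2\pmod 3$, which is clearable.
\item For odd $n\ge3$, $A-5$ is a nonnegative multiple of $3$, so discard it by three-move reductions. Send two pebbles to $v_{n+1}$ with the remaining $2A$, and clear $e_{v_n}\oplus3e_{v_{n+1}}$ by $\pmove{v_{n+1}}{v_n}$, $\pmove{v_n}{v_{n+1}}$, $\pmove{v_{n+1}}{v_n}$.
\end{itemize}
Your fallback of adjusting the pile within a residue class is not available. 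Once the two antipodal singletons are fixed, the size $3A-3$ pins the pile at $3A-5$.

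The supporting steps are also unsound, and the argument needs a different witness.
\begin{itemize}
\item The normalisation in Step~1 is not valid for clearing. Deleting a pair of opposite moves on an edge $uv$ adds $e_u\oplus e_v$ to the final configuration, and opposite moves are indispensable, as the sequences above show.
\item Your proposed invariant carries no information. Since $2^n(-2)^{-d}=(-1)^d2^{n-d}\equiv(-1)^n\pmod 3$, your weight is just $(-1)^n\norm{c}$ modulo $3$, which changes at every move.
\item One non-clearable configuration of size $3A-3$ gives the bound only together with Lemma~\ref{lm:threshold-monotonicity} and the terminal configurations of size at most $2n+1$.
\end{itemize}
A witness that does work is the stacked configuration $(3A-3)e_{v_0}$. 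Take $R(v)=2^{\operatorname{dist}(v_0,v)}$. Outward moves lose no $R$-weight, an inward move $\pmove{u}{v}$ loses $3R(v)$, and a move across $v_nv_{n+1}$ loses $A$. So a clearing that ends at $e_x$ and uses $k$ antipodal moves satisfies $3A-3=R(x)+kA+3S$. Reducing modulo $3$ and comparing sizes forces $1\le k\le2$ and $x\notin\{v_n,v_{n+1}\}$.

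Now use the first-emptying argument from the proof of Theorem~\ref{tm:tree}: a set that is occupied at some time and empty at the end must be left by an exiting move. Applied to $\{v_n,v_{n+1}\}$, it forces an inward move costing $3A/2$, which rules out $k=2$. If $k=1$, say the crossing enters $v_{n+1}$. Apply the argument to every tail of the arm $v_0,v_{2n},\dots,v_{n+1}$ that misses $x$. This forces inward moves with total loss at least $3(A-R(x))$, or $3(A-1)$ if $x$ is off that arm. Either amount exceeds the budget $2A-3-R(x)$.
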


\begin{lemma}[Monotonicity above the order]\label{lm:threshold-monotonicity}
Let $G$ be a finite graph and let $t>|V(G)|$. If every configuration of size
$t$ is stackable (respectively, clearable), then the same is true at every size
larger than $t$.
\end{lemma}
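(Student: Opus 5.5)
The plan is to argue by induction on the size. It suffices to show that if every configuration of size $s$ is stackable (respectively, clearable) for some $s\ge t>|V(G)|$, then every configuration of size $s+1$ is as well. The key idea is that a configuration which is too large to be terminal admits a pebbling step, and a single pebbling step lowers the size by exactly one. This reduces size $s+1$ to size $s$.

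Throughout, I will assume $G$ is connected with at least two vertices, which is the setting in which $\stack{G}$ and $\clear{G}$ are defined. The one-vertex graph is trivial for stacking: every nonempty configuration on it is already stacked. It is excluded for clearing, being bipartite.

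Let $c\in\conf{G}{s+1}$. Since $s+1>|V(G)|$, the pigeonhole principle gives a vertex $u$ with $c(u)\ge2$. Since $G$ is connected and $|V(G)|\ge2$, $u$ has a neighbour $w$. So the step $\pmove{u}{w}$ is applicable. Put
\[
c'=\poper{u}{w}{c},\qquad \norm{c'}=\norm{c}-1=s.
\]
By the induction hypothesis $c'$ is stackable (respectively, clearable), so there is a stacked (respectively, cleared) configuration $d$ with $\pmapsto{c'}{d}$. Prepending the step $\pmove{u}{w}$ gives $\pmapsto{c}{d}$. Hence $c$ is stackable (respectively, clearable). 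Starting the induction at $s=t$ proves the lemma for all sizes larger than $t$.

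The only point needing care is guaranteeing an applicable step. This is exactly where the hypothesis $t>|V(G)|$ (rather than merely $t\ge2$) enters, and where connectivity supplies the neighbour $w$. The alternative of deleting a pebble and replaying a stacking sequence for the smaller configuration would fail, because the leftover pebble could lie off the final stack. Using a genuine pebbling step avoids this issue entirely. I do not expect any further obstacle.
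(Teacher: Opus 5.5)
Your proof is correct and is essentially the paper's argument: with more than $|V(G)|$ pebbles some move is applicable, each move lowers the size by exactly one, and so one descends to size $t$ (the paper iterates directly rather than phrasing this as an induction). Your explicit use of connectivity to supply the neighbour $w$ spells out a point the paper leaves implicit, and the disconnected case you set aside needs no argument, since a configuration with pebbles in two components can never be stacked or cleared, so the lemma's hypothesis fails there.
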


\begin{proof}
Let $c$ be a configuration of size greater than $t$. As long as its size is
greater than $t$, it has more than $|V(G)|$ pebbles and therefore admits a
pebbling move. Since each move decreases the size by one, repeated moves
produce a configuration of size $t$, to which the assumed conclusion applies.
\end{proof}

\subsection{Pullback and Pushforward Lemmas}

We first record an elementary path operation. Let $P_k$ have vertices
$v_1,\dots,v_k$ in natural order, let $1\le h<k$, and let $d$ be a $0$--$1$
configuration on the internal vertices of the path. Then
\begin{equation}\label{eq:elementary-path-operation}
\Gmapsto{2^h\cdot e_{v_1}\oplus d}{e_{v_{h+1}}\oplus d}{P_k}.
\end{equation}
Indeed, for $i=1,\dots,h$, perform $2^{h-i}$ moves
$\pmove{v_i}{v_{i+1}}$, using at each stage only the pebbles produced by the
preceding moves. This leaves every pebble of $d$ in place. We shall use this
operation without further comment.

In the following lemmas, $d:X\to\{0,1\}$ denotes a $0$--$1$ configuration on
$X$.

\begin{lemma}[Pullback Lemma]\label{lm:pullback}
Assume that $k\ge2$ and $d:\{v_2,\dots,v_k\}\to\{0,1\}$. Put
\begin{equation*}
J=\{i:2\le i\le k,\ d(v_i)=1\}.
\end{equation*}
Define
\begin{equation}\label{eq:pullback-cost}
\pull{d}=
\begin{cases}
1, & \text{if }J=\varnothing,\\[2mm]
\displaystyle
2^{j-1}+\sum_{\substack{2\le i<j\\ i\notin J}}2^{i-1},
& \text{if }J\ne\varnothing,\quad j=\max J.
\end{cases}
\end{equation}
Then
\begin{equation}\label{eq:pullback-reach}
\Gmapsto{\pull{d}\cdot e_{v_1}\oplus d}{e_{v_1}}{P_k}.
\end{equation}
Moreover, writing $q=|\supp{d}|$, we have
\begin{equation}\label{eq:pullback-bound}
\pull{d}\le
\begin{cases}
1, & q=0,\\
2^k-2q, & q>0.
\end{cases}
\end{equation}
\end{lemma}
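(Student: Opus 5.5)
We argue by induction on $j=\max J$, pulling the outermost pebble back toward $v_1$ one vertex at a time and using the pebbles of $d$ as partial fuel along the way. If $J=\varnothing$ there is nothing to do: $1\cdot e_{v_1}$ is already $e_{v_1}$. Suppose $J\neq\varnothing$, and think of the process in reverse. To deliver one pebble to $v_1$ we need $2$ pebbles on $v_2$. Inductively, to end with one pebble on $v_1$ and nothing elsewhere, we need $2^{i-1}$ pebbles to arrive at $v_i$, counting the pebble already there. The point is that $d$ supplies one of these when $v_i\in\supp{d}$.

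\textbf{Reachability.} We construct the sequence explicitly. For each $i$ with $2\le i<j$ and $i\notin J$, use $2^{i-1}$ pebbles from $v_1$ and the elementary path operation \eqref{eq:elementary-path-operation} (with $h=i-1$) to place one pebble on $v_i$. These operations leave all other pebbles fixed, so their order does not matter.

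After this step every vertex $v_2,\dots,v_{j-1}$ carries exactly one pebble, $v_j$ carries one pebble (since $j\in J$), and the remaining $2^{j-1}$ pebbles stay on $v_1$. Now send $2^{j-2}$ pebbles' worth from $v_1$ to $v_{j-1}$ via the elementary operation with $h=j-2$. Vertex $v_{j-1}$ already holds one pebble, so it now has $2$. Then cascade backward: move $v_j$'s single pebble is not usable directly, so instead proceed as follows.

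A cleaner arrangement is to note that $v_2,\dots,v_j$ each hold one pebble and $v_1$ holds $2^{j-1}$. Push $2^{j-1}$ from $v_1$ out to $v_j$ using the elementary operation ($h=j-1$), so $v_j$ has $2$ pebbles. Move $\pmove{v_j}{v_{j-1}}$, giving $v_{j-1}$ two pebbles, then $\pmove{v_{j-1}}{v_{j-2}}$, and so on down to $\pmove{v_2}{v_1}$. Each step consumes the single pebble sitting on the vertex, and the process ends with exactly $e_{v_1}$, since $v_1$ had $0$ pebbles left after the push.

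Vertices beyond $v_j$ carry no pebbles, because $j=\max J$. This proves \eqref{eq:pullback-reach}.

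\textbf{The bound \eqref{eq:pullback-bound}.} For $q>0$ we write
\[
\pull{d}=2^{j-1}+\sum_{i=2}^{j-1}2^{i-1}-\sum_{i\in J,\,i<j}2^{i-1}=2^{j}-2-\sum_{i\in J\setminus\{j\}}2^{i-1}.
\]
Since $j\le k$, it suffices to show $2^j-2-\sum_{i\in J\setminus\{j\}}2^{i-1}\le 2^k-2q$.

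Each of the $q-1$ indices in $J\setminus\{j\}$ is at least $2$, so each contributes at least $2$, and the sum is at least $2(q-1)$. Hence $\pull{d}\le 2^j-2q\le 2^k-2q$.

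The only step requiring care is checking that the backward cascade leaves exactly one pebble. This holds because every vertex strictly between $v_1$ and $v_j$ has exactly one pebble at the start of the cascade, and each receives exactly one pebble from the vertex above it.
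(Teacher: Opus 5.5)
Your proposal is correct and is essentially the paper's proof: you fill each empty $v_i$ with $i<j$ using $2^{i-1}$ root pebbles, send the remaining $2^{j-1}$ pebbles to $v_j$, cascade back to $v_1$, and bound the cost via $\pull{d}=2^j-2-\sum_{i\in J\setminus\{j\}}2^{i-1}\le 2^j-2q$. In a write-up, delete the abandoned detour (sending $2^{j-2}$ pebbles to $v_{j-1}$) and the ``induction on $j$'' framing, which the argument never uses, and state that the $q=0$ case of the bound is immediate from the definition.
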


\begin{proof}
If $J=\varnothing$, then~\eqref{eq:pullback-reach} is immediate. Suppose that
$J\ne\varnothing$ and let $j=\max J$. For each $i<j$ with $i\notin J$, use
$2^{i-1}$ pebbles at $v_1$ to place one pebble at $v_i$. Then use
$2^{j-1}$ pebbles at $v_1$ to place one additional pebble at $v_j$.
At this point every vertex $v_2,\dots,v_{j-1}$ contains one pebble, $v_j$
contains two pebbles, and every vertex beyond $v_j$ is empty. The moves
\begin{equation*}
\pmove{v_i}{v_{i-1}}
\qquad (i=j,j-1,\dots,2)
\end{equation*}
now produce $e_{v_1}$.

For the estimate, let $q=|J|$. From~\eqref{eq:pullback-cost},
\begin{align*}
\pull{d}
&=2^j-2-\sum_{\substack{2\le i<j\\ i\in J}}2^{i-1}\\
&\le 2^j-2-2(q-1)
=2^j-2q
\le2^k-2q.
\end{align*}
\end{proof}

\begin{lemma}[Pushforward Lemma]\label{lm:pushforward}
Assume that $k\ge2$ and $d:\{v_2,\dots,v_{k-1}\}\to\{0,1\}$. Define
\begin{equation}\label{eq:pushforward-cost}
\push{d}
=2+\sum_{\substack{2\le i<k\\ d(v_i)=0}}2^{i-1}
=2^{k-1}-\sum_{i=2}^{k-1}2^{i-1}d(v_i).
\end{equation}
Then
\begin{equation}\label{eq:pushforward-reach}
\Gmapsto{\push{d}\cdot e_{v_1}\oplus d}{e_{v_k}}{P_k}.
\end{equation}
Moreover,
\begin{equation}\label{eq:pushforward-bound}
\push{d}\le2^{k-1}-2|\supp{d}|.
\end{equation}
\end{lemma}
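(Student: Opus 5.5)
The plan mirrors the proof of the Pullback Lemma: first fill every empty internal vertex, then sweep a single pebble forward. First I will check that the two expressions in \eqref{eq:pushforward-cost} agree. The identity $\sum_{i=2}^{k-1}2^{i-1}=2^{k-1}-2$ gives
\begin{equation*}
2+\sum_{\substack{2\le i<k\\ d(v_i)=0}}2^{i-1}
=2+\bigl(2^{k-1}-2\bigr)-\sum_{i=2}^{k-1}2^{i-1}d(v_i).
\end{equation*}
The bound \eqref{eq:pushforward-bound} then follows at once, since each term $2^{i-1}d(v_i)$ with $i\ge2$ is at least $2d(v_i)$. Summing gives $\sum_i 2^{i-1}d(v_i)\ge 2|\supp{d}|$.

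For the reachability statement \eqref{eq:pushforward-reach}, I will start from $\push{d}\cdot e_{v_1}\oplus d$ and proceed in two phases. In the first phase, for each $i$ with $2\le i<k$ and $d(v_i)=0$, I spend $2^{i-1}$ pebbles from $v_1$ to place one pebble on $v_i$. I will do this via the elementary path operation \eqref{eq:elementary-path-operation} with $h=i-1$, which leaves the existing $0$--$1$ pebbles on the internal vertices untouched. To keep the current configuration on the internal vertices $0$--$1$ throughout, I will process these $i$ in increasing order. The operation only passes pebbles through $v_2,\dots,v_{i-1}$ transiently and deposits exactly one new pebble at $v_i$, which was empty.

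After this phase, $v_1$ holds exactly $2$ pebbles, each of $v_2,\dots,v_{k-1}$ holds exactly one, and $v_k$ is empty. In the second phase I perform the moves $\pmove{v_i}{v_{i+1}}$ for $i=1,2,\dots,k-1$ in order. Each move finds two pebbles at $v_i$: one that has just arrived and one that was already there (for $i=1$, the two left-over pebbles). It clears $v_i$ and passes one pebble on. At the end the only pebble is at $v_k$, which gives $e_{v_k}$.

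The only point needing care is the first phase: the elementary operation is stated for a $0$--$1$ configuration $d$ on internal vertices and a single pile at $v_1$. I must make sure that after earlier insertions the hypotheses still hold. They do, because the internal configuration remains $0$--$1$ and the target vertex $v_{h+1}=v_i$ is internal with $h<k$. I expect no real obstacle beyond this bookkeeping.
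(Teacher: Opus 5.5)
Your proposal is correct and takes essentially the same route as the paper. Both fill each empty internal vertex $v_i$ using $2^{i-1}$ root pebbles, which leaves exactly two pebbles at $v_1$, then sweep with $\pmove{v_i}{v_{i+1}}$ for $i=1,\dots,k-1$, and obtain the bound from $2^{i-1}\ge2$ for $i\ge2$. Your explicit check that the two cost expressions agree, and your bookkeeping about the order of insertions (which does not actually matter), are harmless additions.
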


\begin{proof}
For each internal vertex $v_i$ with $d(v_i)=0$, use $2^{i-1}$ pebbles at
$v_1$ to place one pebble at $v_i$. By~\eqref{eq:pushforward-cost}, two pebbles
remain at $v_1$, and every internal vertex contains one pebble. The moves
\begin{equation*}
\pmove{v_i}{v_{i+1}}
\qquad (i=1,2,\dots,k-1)
\end{equation*}
produce $e_{v_k}$. Finally, every occupied internal vertex subtracts a term
$2^{i-1}\ge2$ from the second expression in~\eqref{eq:pushforward-cost}, which
gives~\eqref{eq:pushforward-bound}.
\end{proof}

We recall the weight notation from~\cite{CsSo26}. Given a finite graph $H$, a
function $R\colon V(H)\to\mathbb{R}_{>0}$, and a configuration $f$ on $H$, the
\emph{$R$-weight} of $f$ is
\begin{equation*}
w_R(f)=\sum_{v\in V(H)}R(v)f(v).
\end{equation*}
For later use on $P_k$, define $R(v_i)=2^{i-1}$. A move
$\pmove{v_i}{v_{i+1}}$ preserves the $R$-weight, while a move
$\pmove{v_{i+1}}{v_i}$ decreases it by $3\cdot2^{i-1}$. Hence every path move
preserves the $R$-weight modulo $3$. Consequently, the two lemmas also give
\begin{equation}\label{eq:path-cost-congruences}
\pull{d}+\sum_{i=2}^k2^{i-1}d(v_i)\equiv1\pmod3
\end{equation}
and
\begin{equation}\label{eq:path-push-congruence}
\push{d}+\sum_{i=2}^{k-1}2^{i-1}d(v_i)
\equiv2^{k-1}\pmod3.
\end{equation}

We shall also use the following elementary observation. If a vertex $r$ has a
neighbor $x$, then the three moves
\begin{equation}\label{eq:three-move-reduction}
\pmove{r}{x},\qquad \pmove{r}{x},\qquad \pmove{x}{r}
\end{equation}
reduce the number of pebbles at $r$ by three and restore the number at $x$.
Thus a multiple of three surplus pebbles at $r$ can be discarded while a
fixed positive base pile is retained. More explicitly, a stack of size
$3q+1$ can be reduced to one pebble at $r$, while a stack of size $3q+2$ can
be reduced to two pebbles at $r$ and then cleared by one move
$\pmove{r}{x}$. Hence, whenever the stacking vertex has a neighbor, every
stacked configuration whose size is not divisible by three is clearable.

\subsection{Even cycles}

\begin{theorem}[Almost stacked configurations on even cycles]\label{tm:ash-even-cycle-upper}
For every $n\ge2$, an almost stacked configuration $c$ on $C_{2n}$ is
stackable provided
\begin{equation*}
\norm{c}\ge2^{n+1}-1.
\end{equation*}
\end{theorem}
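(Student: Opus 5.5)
Label the vertices of $C_{2n}$ cyclically as $v_1,\dots,v_{2n}$ so that $c$ is almost stacked at $v_1$. Write $m=c(v_1)$ and let $q$ be the number of other occupied vertices, so $m+q\ge 2^{n+1}-1$. The plan is to stack at $v_1$ by splitting the cycle into two arms and applying the Pullback Lemma~\ref{lm:pullback} to each. The main tool is a budget comparison: the total fuel the two pullbacks need at $v_1$ must not exceed $m$.

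\textbf{Arms and budget inequality.} The antipode $v_{n+1}$ splits the cycle into two paths from $v_1$:
\[
A=v_1v_2\cdots v_{n+1},\qquad B=v_1v_{2n}v_{2n-1}\cdots v_{n+2}.
\]
These have $n+1$ and $n$ vertices. Let $d_A$ and $d_B$ be the $0$--$1$ restrictions of $c$ to the non-base vertices of $A$ and $B$, with $q_A+q_B=q$. The two pullbacks run independently, since each moves only vertices of its own arm other than $v_1$, and every move on $A$ or $B$ is a move on $C_{2n}$. So if $m\ge\pull{d_A}+\pull{d_B}$, we perform both and end with all pebbles on $v_1$ and the arms empty, which is stacked. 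We could even save one pebble, since the pebble returned by the first pullback may serve as fuel. Hence it suffices to show
\[
\pull{d_A}+\pull{d_B}\le m+1 .
\]

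\textbf{Case $c(v_{n+1})=0$.} Then $d_A$ is supported on $v_2,\dots,v_n$, so $A$ may be shortened to a path with $n$ vertices. By \eqref{eq:pullback-bound}, each arm then costs at most $2^n-2q_X$ when $q_X>0$, and at most $1$ when $q_X=0$. Summing gives roughly $2^{n+1}-2q$, which is at most $m+1=2^{n+1}-q$ as long as $q\ge1$. The boundary cases $q_A=0$ or $q_B=0$ need a short separate check using the exact formula \eqref{eq:pullback-cost}. The case $q=0$ is trivial: the configuration is already stacked.

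\textbf{Case $c(v_{n+1})=1$ (main obstacle).} Now $j=n+1$ on arm $A$, and the estimate gives only $2^{n+1}-2q_A$. Added to the cost of $B$, this can exceed the budget. I would first try to assign the antipodal pebble to whichever arm is cheaper. By the exact formula \eqref{eq:pullback-cost}, the cost of bringing it home along an arm is $2^n$ plus the cost of the empty vertices on that arm, so occupied vertices on either side reduce it. I would then redo the sum with the exact formulas, not the crude bounds, and use the congruence \eqref{eq:path-cost-congruences} to gain the extra pebble of slack where needed.

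If $v_1$ still fails, I would change the target. One option is to stack at $v_{n+1}$ instead, pushing fuel from $v_1$ along both arms with the Pushforward Lemma~\ref{lm:pushforward}; here the occupied arm vertices act as free intermediate pebbles. Another option is to stack at the first occupied neighbour of $v_1$. Finally, I would compare the worst configurations, those with only the antipode and a few vertices adjacent to it occupied, against the extremal configuration $\econf{m}{n}$ of Theorem~\ref{tm:path-lower}. This should confirm that the threshold $2^{n+1}-1$ is exactly what the tightest case forces, and the arithmetic in that case is where I expect the real work to lie.
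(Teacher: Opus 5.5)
Your Case~1 is correct in substance and matches the paper's argument: two pullbacks along the $n$-vertex arms $v_1v_2\cdots v_n$ and $v_1v_{2n}\cdots v_{n+2}$. The gap is Case~2, $c(v_{n+1})=1$. There you offer a list of strategies rather than a proof, and the leading strategy cannot work. On $C_6$ take $c=12e_{v_1}\oplus e_{v_2}\oplus e_{v_4}\oplus e_{v_6}$, of size $15=2^4-1$. Sending $v_4$ home along $v_1v_2v_3v_4$ costs $2^3+2^2=12$, and the other arm costs $2$. The total is $14>m+1=13$, and the other assignment is symmetric. The congruence~\eqref{eq:path-cost-congruences} cannot repair this, because it is not a slack issue. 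With $R=2^{\operatorname{dist}(v_1,\cdot)}$, the inward moves forced across $v_2v_1$, $v_3v_2$, $v_4v_3$, $v_6v_1$ already lose $3+6+12+3=24=w_R(c)$. So no sequence avoiding the edge $v_4v_5$ stacks $c$ at $v_1$, and symmetrically for $v_3v_4$. Your fallback of stacking at $v_{n+1}$ by two pushforwards also fails as stated. It can be unaffordable: on $C_4$, $5e_{v_1}\oplus e_{v_2}\oplus e_{v_3}$ needs $2+4=6>5$. When it is affordable, it leaves $m-s_A-s_B$ pebbles at $v_1$. Their residue modulo~$3$ is congruent to $w_R(c)$, so in general they cannot be discarded.

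The missing idea is to give up on a prescribed target and reduce to a path.
\begin{enumerate}[(1)]
\item Push forward only along $U=v_1v_2\cdots v_{n+1}$. The cost is $s\le 2^n-2q_U$, where $q_U$ counts the occupied vertices among $v_2,\dots,v_n$. This is affordable because at most $2n-1$ pebbles lie off $v_1$, so $c(v_1)\ge2^{n+1}-2n\ge2^n$.
\item This puts a second pebble on $v_{n+1}$. Make the single crossing move $\pmove{v_{n+1}}{v_{n+2}}$.
\item The result is supported on $W'=\{v_1,v_{2n},\dots,v_{n+2}\}$, which induces $P_n$. Its size is $\norm{c}-s-q_U\ge2^n-1$. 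Theorem~\ref{tm:pn} and Lemma~\ref{lm:threshold-monotonicity} then make it stackable, though not necessarily at $v_1$.
\end{enumerate}
Quoting $\stack{P_n}=2^n-1$ absorbs exactly the bookkeeping you were stuck on: the arbitrary $0$--$1$ pattern on the second arm, a possible second pebble at $v_{n+2}$, and the surplus at $v_1$. In the $C_6$ example, the mirror image of this step works. Push along $v_1v_6v_5v_4$, then move $\pmove{v_4}{v_3}$. This leaves $6e_{v_1}\oplus e_{v_2}\oplus e_{v_3}$ on a $P_3$, which stacks. The comparison with $\econf{m}{n}$ and Theorem~\ref{tm:path-lower} concerns only the lower bound and plays no role in this upper-bound statement.
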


\begin{proof}
Let $G=C_{2n}$, with vertices $v_1,\dots,v_{2n}$ in cyclic order. By symmetry,
we may assume that $c$ is almost stacked at $v_1$. Put
\begin{equation*}
U=\{v_1,v_2,\dots,v_{n+1}\},
\qquad
W=\{v_1,v_{2n},v_{2n-1},\dots,v_{n+1}\}.
\end{equation*}
For $H\in\{U,W\}$, let
\begin{equation*}
H'=H\setminus\{v_{n+1}\},
\qquad
H''=H\setminus\{v_1,v_{n+1}\}.
\end{equation*}
Thus $G[H']\cong P_n$ and $G[H]\cong P_{n+1}$.

We distinguish two cases.

\medskip
\noindent\textbf{Case 1: $c(v_{n+1})=0$.}
For $H\in\{U,W\}$, let
\begin{equation*}
d_H=c\restriction H'',
\qquad q_H=|\supp{d_H}|,
\qquad p_H=\pull{d_H},
\end{equation*}
where the Pullback Lemma is applied to the path $G[H']$.

If $q_U+q_W=0$, then $c$ is already stacked at $v_1$. Suppose that
$q_U+q_W>0$. If both $q_U$ and $q_W$ are positive, then
\begin{equation*}
p_U+p_W
\le2^{n+1}-2(q_U+q_W).
\end{equation*}
If exactly one of them is positive, then the pullback cost on the empty arm is
one, and hence
\begin{equation*}
p_U+p_W
\le2^n-2(q_U+q_W)+1
\le2^{n+1}-2(q_U+q_W).
\end{equation*}
On the other hand,
\begin{equation*}
c(v_1)=\norm{c}-q_U-q_W
\ge2^{n+1}-1-q_U-q_W.
\end{equation*}
Since $q_U+q_W\ge1$, it follows that $c(v_1)\ge p_U+p_W$.
Apply the two pullback sequences. They clear every vertex different from
$v_1$, so $c$ is stackable.

\medskip
\noindent\textbf{Case 2: $c(v_{n+1})=1$.}
Let
\begin{equation*}
d=c\restriction U'',
\qquad q=|\supp{d}|,
\qquad s=\push{d}.
\end{equation*}
The Pushforward Lemma, applied to $G[U]\cong P_{n+1}$, gives
\begin{equation}\label{eq:even-push}
s\le2^n-2q
\quad\text{and}\quad
\Gmapsto{s\cdot e_{v_1}\oplus d}{e_{v_{n+1}}}{G[U]}.
\end{equation}
There are at most $2n-1$ pebbles outside $v_1$, and therefore
\begin{equation*}
c(v_1)\ge2^{n+1}-2n\ge2^n\ge s,
\end{equation*}
where $2^n\ge2n$ for $n\ge2$. Thus the pushforward sequence is applicable.
It clears $U''$ and places a second pebble at $v_{n+1}$. Now perform
$\pmove{v_{n+1}}{v_{n+2}}$, and call the resulting configuration $c^*$.

The pushforward sequence has $s+q-1$ moves, and the final move across
$v_{n+1}v_{n+2}$ contributes one more. Hence
\begin{align*}
\norm{c^*}
&=\norm{c}-s-q\\
&\ge2^{n+1}-1-(2^n-2q)-q\\
&\ge2^n-1.
\end{align*}
Moreover, $\supp{c^*}\subseteq W'$. Since $G[W']\cong P_n$,
and $2^n-1>n$, Theorem~\ref{tm:pn} and
Lemma~\ref{lm:threshold-monotonicity} show that $c^*$ is stackable. Therefore
$c$ is stackable.
\end{proof}

\subsection{Odd cycles}

\begin{theorem}[Almost stacked configurations on odd cycles]\label{tm:clearoddcircuit}
If $n\ge1$, then an almost stacked configuration $c$ on $C_{2n+1}$ is
clearable provided
\begin{equation*}
\norm{c}\ge3\cdot2^n-2.
\end{equation*}
\end{theorem}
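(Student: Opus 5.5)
The plan is to mirror the proof of Theorem~\ref{tm:ash-even-cycle-upper}, with the extra work going into a mod~$3$ obstruction that odd cycles allow us to escape. Label $C_{2n+1}$ as $v_1,\dots,v_{2n+1}$ and assume $c$ is almost stacked at $v_1$. Split the cycle into two arms from $v_1$: $U=\{v_1,\dots,v_{n+1}\}$ and $W=\{v_1,v_{2n+1},\dots,v_{n+2}\}$, so $G[U]\cong G[W]\cong P_{n+1}$, and $v_{n+1}v_{n+2}$ is the antipodal edge joining their ends. The case $n=1$ is $K_3$ with $\norm{c}\ge4$ and follows from Theorem~\ref{tm:complete-graph}, so assume $n\ge2$.

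\emph{First move: pull everything back to $v_1$.} Let $d_U=c\restriction(U\setminus\{v_1\})$ and $d_W=c\restriction(W\setminus\{v_1\})$, with $q_U,q_W$ their support sizes. Apply the Pullback Lemma on both arms. It suffices to have
\begin{equation*}
c(v_1)\ge\norm{c}-q_U-q_W\ge 3\cdot2^n-2-q_U-q_W,
\end{equation*}
since the bound~\eqref{eq:pullback-bound} gives $\pull{d_U}+\pull{d_W}\le 2^{n+2}-2(q_U+q_W)$, and in fact much less in typical subcases. After the pullbacks we have a stack of $m$ pebbles at $v_1$, with $m\equiv\norm{c}-\text{(moves)}$. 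By the three-move reduction~\eqref{eq:three-move-reduction}, if $3\nmid m$ the stack is clearable. The residue of $m$ is forced by~\eqref{eq:path-cost-congruences} (the $R$-weight mod~$3$ along each arm), so I will compute exactly when $m\equiv0\pmod 3$ occurs.

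\emph{The key odd-cycle trick for a pile divisible by three.} A stack of exactly $3\cdot2^n$ pebbles at $v_1$ is clearable. Send $2^n$ pebbles along $U$ to put one pebble at $v_{n+1}$. Send $2^{n+1}$ pebbles along $W$, through $v_{n+2}$, to put a second pebble at $v_{n+1}$. Then the move $\pmove{v_{n+1}}{v_{n+2}}$ leaves a single pebble. Since $3\mid 3\cdot2^n$, any stack $m\ge3\cdot2^n$ with $3\mid m$ can first be reduced by threes to exactly $3\cdot2^n$. So the only problematic situation is $3\mid m<3\cdot2^n$, which happens exactly when arm pebbles forced a nontrivial pullback cost.

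\emph{Main obstacle: the residue-zero case with a small pile.} Here I would not pull back. Instead I would route the arm pebbles around the cycle so that their parity contribution is used rather than wasted. Concretely, apply the Pushforward Lemma along $U$ to deliver a pebble at $v_{n+1}$, absorbing $d_U$ at cost $\push{d_U}\le2^n-2q_U$. In the same way, push along $W$ to $v_{n+1}$ through $v_{n+2}$, or pull back along $W$, depending on the case; the choices are governed by~\eqref{eq:path-push-congruence}. The goal is to end with either two pebbles at $v_{n+1}$ and nothing else, or one pebble at $v_{n+1}$ plus a stack at $v_1$ that is not divisible by three, which is then reduced as before. The bookkeeping is to check two things. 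First, the total cost fits within $3\cdot2^n-2-q_U-q_W$, using the bounds $2^{k}-2q$ and the slack from $q_U+q_W\ge1$ in this case. Second, the congruences guarantee that at least one of the (pull/pull, push/pull, pull/push, push/push) combinations leaves a residue that is not zero mod~$3$. Verifying that some combination always has the right residue and stays within budget, especially when $v_{n+1}$ or $v_{n+2}$ already carries a pebble, is the step I expect to require the most care. I would organize it as a small table of residues of $\sum 2^{i-1}d(v_i)$ on each arm modulo~$3$.
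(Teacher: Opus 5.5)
Your outline has the right tools (two arms from $v_1$, the Pullback and Pushforward Lemmas, the $R$-weight modulo $3$, the three-move reduction), but it does not prove the theorem, and two of its steps fail as stated.

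\emph{The double pullback is not always affordable.} The estimate you quote, $\pull{d_U}+\pull{d_W}\le2^{n+2}-2(q_U+q_W)$, fits inside $3\cdot2^n-2-q_U-q_W$ only if $q_U+q_W\ge2^n+2$. This never happens, since $q_U+q_W\le2n$. The failure is real when both antipodal vertices $v_{n+1},v_{n+2}$ are occupied. On $C_5$, the configuration $8e_{v_1}\oplus e_{v_3}\oplus e_{v_4}$ has size $10=3\cdot2^2-2$, yet each arm's pullback costs $6$.

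\emph{The residue-zero case is left open, and your fallbacks do not work.} You correctly identify this case as the heart of the matter, but leave it as an unexecuted ``table''. At the threshold size every reachable stack has at most $3\cdot2^n-2$ pebbles, so the $3\cdot2^n$ trick is vacuous exactly where it is needed. Your other target, ``one pebble at $v_{n+1}$ plus a stack at $v_1$ of nonzero residue'', is not a success state. Reducing the stack as before leaves two isolated pebbles, and absorbing the pebble at $v_{n+1}$ needs another expensive pullback whose cost and residue you have not controlled.

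The missing idea is to finish on the antipodal edge $ab$, with $a=v_{n+1}$ and $b=v_{n+2}$. There $e_a\oplus2e_b$ is cleared by $\pmove{b}{a},\pmove{a}{b}$, and $e_a\oplus3e_b$ by $\pmove{b}{a},\pmove{a}{b},\pmove{b}{a}$. The key fact is $R(a)=R(b)=2^n=A$, so a move across $ab$ lowers $w_R$ by $A\not\equiv0\pmod3$, while every other move preserves $w_R$ modulo $3$. In your example, spending all $8$ root pebbles to send two more pebbles to $v_4$ along $v_1v_5v_4$ gives $e_{v_3}\oplus3e_{v_4}$.

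The paper orients the cycle so that $c(a)\le c(b)$ and splits into four cases.
\begin{enumerate}[(1)]
\item If $c(a)=0$ and $w_R(c)\not\equiv0$: pull back both arms. This is affordable because, with $a$ empty, the $U$-arm is effectively a copy of $P_n$.
\item If $c(a)=0$ and $w_R(c)\equiv0$: the surplus $\ell=c(v_1)-s_U-s_W$ is $\equiv0$ when $c(b)=1$ and $\equiv A$ when $c(b)=0$. When $c(b)=0$, one also checks $\ell\ge A$. Discard a multiple of three, spend $A$ more pebbles on $b$ if $c(b)=0$, and the two pushforwards then reach $e_a\oplus2e_b$.
\item If $c(a)=c(b)=1$ and $w_R(c)\not\equiv A$: push along $W$, play $\pmove{b}{a}$, carry the pair back through the occupied suffix of $U$, and pull back the rest. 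This leaves a stack of residue $w_R(c)-A\not\equiv0$.
\item If $c(a)=c(b)=1$ and $w_R(c)\equiv A$: pull back the interior of one arm, push along the other, and add one more pebble at $b$ to reach $e_a\oplus3e_b$. Here the congruence upgrades the budget estimate $L\ge-1$ to $L\ge0$.
\end{enumerate}
Each case has its own budget check against the root pile, and none of these maneuvers or checks appears in your proposal.
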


\begin{proof}
If $n=1$, then $C_3\cong K_3$. Since
$\clear{K_3}=4>|V(K_3)|$, Theorem~\ref{tm:complete-graph} and
Lemma~\ref{lm:threshold-monotonicity} give the result. Assume that $n\ge2$,
and put
\begin{equation*}
A=2^n.
\end{equation*}

Let $G=C_{2n+1}$ have vertices $v_1,\dots,v_{2n+1}$ in cyclic order. By
symmetry, $c$ is almost stacked at $r=v_1$, and, after reversing the cyclic
order if necessary, we may assume that
\begin{equation*}
\alpha=c(v_{n+1})\le c(v_{n+2})=\beta.
\end{equation*}
Thus $(\alpha,\beta)$ is one of $(0,0),(0,1),(1,1)$. Put
\begin{equation*}
a=v_{n+1},\qquad b=v_{n+2},
\end{equation*}
and let
\begin{equation*}
U=\{v_1,v_2,\dots,v_{n+1}\},
\qquad
W=\{v_1,v_{2n+1},v_{2n},\dots,v_{n+2}\}.
\end{equation*}
The two induced paths $G[U]$ and $G[W]$ have length $n$, and
$e=ab$ is the only edge not contained in them. Let
\begin{equation*}
U^\circ=U\setminus\{r,a\},
\qquad
W^\circ=W\setminus\{r,b\},
\end{equation*}
and define
\begin{equation*}
q_H=|\supp{c\restriction H^\circ}|,
\qquad
D_H=\sum_{x\in H^\circ}2^{\operatorname{dist}_G(r,x)}c(x)
\qquad(H\in\{U,W\}).
\end{equation*}
Finally, set $M=c(r)$. Since every off-root pebble is a singleton,
\begin{equation}\label{eq:odd-root-lower}
M\ge3A-2-q_U-q_W-\alpha-\beta.
\end{equation}

Define $R(x)=2^{\operatorname{dist}_G(r,x)}$ and use the associated $R$-weight.
Every move in $G-e$ preserves $w_R$ modulo $3$: a move away from $r$ preserves
the weight exactly, while a move toward $r$ decreases it by three times the
weight of the vertex nearer to $r$.

For $H\in\{U,W\}$, apply the Pushforward Lemma to the binary configuration on
$H^\circ$, and denote its cost by $s_H$. The exact
formula~\eqref{eq:pushforward-cost} gives
\begin{equation}\label{eq:odd-push-cost}
s_H=A-D_H\le A-2q_H,
\end{equation}
and the weight congruence gives
\begin{equation}\label{eq:odd-push-weight}
s_H+D_H\equiv A\pmod3.
\end{equation}

\medskip
\noindent\textbf{Case 1: $\alpha=0$ and $w_R(c)\not\equiv0\pmod3$.}
Let $U^-=U\setminus\{a\}$. Pull back the configuration on $U^\circ$ along
$G[U^-]\cong P_n$, and pull back the configuration on
$W\setminus\{r\}$ along $G[W]\cong P_{n+1}$. Denote the two costs by $p_U$
and $p_W$, respectively, and put
\begin{equation*}
x=q_U,\qquad y=q_W+\beta.
\end{equation*}
We claim that $p_U+p_W\le M$. If $x=y=0$, then
$p_U+p_W=2\le3A-2\le M$. If $x=0<y$, then
\begin{equation*}
p_U+p_W\le1+2A-2y\le3A-2-y\le M.
\end{equation*}
If $y=0<x$, then
\begin{equation*}
p_U+p_W\le A-2x+1\le3A-2-x\le M.
\end{equation*}
Finally, if $x,y>0$, then
\begin{equation*}
p_U+p_W\le3A-2(x+y)\le3A-2-(x+y)\le M.
\end{equation*}
Here we used $A\ge4$ and~\eqref{eq:odd-root-lower}.

The two pullback sequences clear every off-root pebble and produce a
configuration stacked at $r$. No move used $e$, so the size of the final stack
is congruent to $w_R(c)$ modulo $3$ and is therefore not divisible by three.
The observation following the path lemmas shows that this stack is clearable.

\medskip
\noindent\textbf{Case 2: $\alpha=0$ and $w_R(c)\equiv0\pmod3$.}
Put
\begin{equation*}
\ell=M-s_U-s_W.
\end{equation*}
By~\eqref{eq:odd-root-lower} and~\eqref{eq:odd-push-cost},
\begin{equation}\label{eq:odd-ell-lower}
\ell
\ge A-2-\beta+(D_U-q_U)+(D_W-q_W)
\ge A-2-\beta+q_U+q_W.
\end{equation}
In particular, $\ell\ge0$. Moreover,
\begin{equation}\label{eq:odd-ell-congruence}
0\equiv w_R(c)
\equiv(s_U+D_U)+(s_W+D_W)+\ell+\beta A
\equiv(2+\beta)A+\ell\pmod3.
\end{equation}

Suppose first that $\beta=1$. Then $\ell\equiv0\pmod3$.
Use~\eqref{eq:three-move-reduction} to discard the $\ell$ surplus pebbles at
$r$,
leaving $s_U+s_W$ pebbles there. Apply the two pushforward sequences. The
resulting configuration has one pebble at $a$ and two at $b$, and is clearable
by the moves $\pmove{b}{a}$ and $\pmove{a}{b}$.

Now suppose that $\beta=0$. Equation~\eqref{eq:odd-ell-congruence} gives
$\ell\equiv A\pmod3$. The lower bound~\eqref{eq:odd-ell-lower} and this
congruence imply that $\ell\ge A$: this is immediate when
$q_U+q_W\ge2$, while for $q_U+q_W\le1$ it follows because the largest integer
below $A$ congruent to $A$ modulo $3$ is $A-3$. Thus
\begin{equation*}
\ell^*=\ell-A
\end{equation*}
is a nonnegative multiple of three. Discard this surplus, apply both
pushforward sequences, and use the remaining $A$ pebbles at $r$ to send one
additional pebble to $b$ along $G[W]$, whose internal vertices are now empty.
Again the resulting configuration has one pebble at $a$ and two at $b$, and
hence is clearable.

\medskip
\noindent\textbf{Case 3: $\alpha=\beta=1$ and
$w_R(c)\not\equiv A\pmod3$.}
First apply the pushforward sequence on $W^\circ$. It clears $W^\circ$ and
places a second pebble at $b$. Perform $\pmove{b}{a}$, so that $b$ becomes
empty and $a$ contains two pebbles.

Starting at $a$, move this pair toward $r$ through the maximal occupied suffix
of $U^\circ$. More explicitly, let $j\in\{1,\dots,n\}$ be minimal such that
$c(v_i)=1$ whenever $j<i\le n$, and perform
\begin{equation*}
\pmove{v_{i+1}}{v_i}
\qquad(i=n,n-1,\dots,j).
\end{equation*}
The resulting configuration on $U^\circ$ is again a $0$--$1$ configuration;
call it $f$, and put $q_f=|\supp f|$. Pull $f$ back to $r$ along
$U^-=U\setminus\{a\}$, and denote the cost by $p_f$.

There are enough pebbles at $r$ for these operations. If $f\ne0$, then
$q_f\ge1$ and the initial configuration on $U^\circ$ was not identically one,
so $q_U\le n-2$. Hence
\begin{align*}
M-(s_W+p_f)
&\ge A-4-q_U+q_W+2q_f\\
&\ge A-n\ge0.
\end{align*}
If $f=0$, then $q_U=n-1$ and $p_f=1$, so
\begin{equation*}
M-(s_W+p_f)\ge2A-n-4+q_W\ge0.
\end{equation*}

These operations clear every off-root vertex and therefore produce a
configuration stacked at $r$. Exactly one move used $e$, namely
$\pmove{b}{a}$, and that move decreases the $R$-weight by $A$. Thus the size
of the final stack is congruent to
\begin{equation*}
w_R(c)-A\not\equiv0\pmod3.
\end{equation*}
The final stack is clearable.

\medskip
\noindent\textbf{Case 4: $\alpha=\beta=1$ and
$w_R(c)\equiv A\pmod3$.}
If one of the configurations on $U^\circ,W^\circ$ is empty, relabel the two
arms so that it is the configuration on $U^\circ$. If both are nonempty,
choose either arm as $U$.

Let $p$ be the pullback cost for $c\restriction U^\circ$ along
$U^-=U\setminus\{a\}$, and let $s=s_W$ be the pushforward cost on the other
arm. Put
\begin{equation*}
C=(p-1)+s+A,
\qquad
L=M-C.
\end{equation*}
We first show that $L\ge0$. If $q_U=0$, then $p=1$ and
\begin{equation*}
C=s+A\le2A-2q_W\le M+1.
\end{equation*}
If $q_U,q_W>0$, then
\begin{equation*}
C\le3A-1-2(q_U+q_W)\le M+1.
\end{equation*}
Thus $L\ge-1$ in either case.

The path congruences give
\begin{equation*}
p+D_U\equiv1\pmod3,
\qquad
s+D_W\equiv A\pmod3.
\end{equation*}
Since
\begin{equation*}
w_R(c)=M+D_U+D_W+2A\equiv A\pmod3,
\end{equation*}
we obtain
\begin{align*}
L
&=M-p+1-s-A\\
&\equiv M+D_U+D_W-2A\\
&\equiv w_R(c)-4A
\equiv-3A
\equiv0\pmod3.
\end{align*}
Since $L\ge-1$, it follows that $L\ge0$.

Discard the $L$ surplus pebbles at $r$
using~\eqref{eq:three-move-reduction}, leaving $C$ pebbles there. Apply the
pullback sequence on $U^\circ$. It clears this arm and leaves
\begin{equation*}
C-p+1=s+A
\end{equation*}
pebbles at $r$. Next apply the pushforward sequence on $W^\circ$, leaving
$A$ pebbles at $r$ and placing a second pebble at $b$. Use the remaining
$A$ pebbles to send one more pebble to $b$ along $G[W]$, whose internal
vertices are now empty. We have reached
\begin{equation*}
e_a\oplus3\cdot e_b.
\end{equation*}
This configuration is clearable by
\begin{equation*}
\pmove{b}{a},\qquad
\pmove{a}{b},\qquad
\pmove{b}{a}.
\end{equation*}
This completes all cases.
\end{proof}

\subsection{Consequences of ASH}

\begin{theorem}[ASH for even cycles]\label{tm:ash-even-cycle-equality}
Assume ASH for even cycles. Then, for every $n\ge2$,
\begin{equation*}
\stack{C_{2n}}=2^{n+1}-1.
\end{equation*}
\end{theorem}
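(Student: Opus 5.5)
The plan is to combine the unconditional lower bound with the almost-stacked upper bound, using ASH to pass from almost stacked configurations to all configurations. Put $t_0=2^{n+1}-1$. Corollary~\ref{cr:stack_c_2n} already gives $\stack{C_{2n}}\ge t_0$, so only $\stack{C_{2n}}\le t_0$ remains.

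For the upper bound I would invoke part~(1) of Hypothesis~\ref{HY_almoststacked} for $G=C_{2n}$. It says that $\stack{C_{2n}}$ equals the least integer $t\ge2$ such that every almost stacked configuration in $\conf{C_{2n}}{t}$ is stackable. Call this least integer $t^*$. Theorem~\ref{tm:ash-even-cycle-upper} shows that every almost stacked configuration of size exactly $t_0$ is stackable. Since $t_0\ge 7\ge2$, the integer $t_0$ belongs to the set over which the minimum is taken, and hence $t^*\le t_0$. ASH then gives $\stack{C_{2n}}=t^*\le t_0$, and combined with the lower bound this yields equality.

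The only point needing care is the exact meaning of ``least integer'' in ASH. If ASH is read as identifying $\stack{G}$ with the least $t$ such that almost stacked configurations of size $t$ are stackable, then the above is immediate. If instead one reads it as requiring stackability at all sizes $\ge t$, I would note that Theorem~\ref{tm:ash-even-cycle-upper} is stated for all $\norm{c}\ge t_0$, so the same conclusion holds.

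As a consistency check, one can also argue directly. ASH forces $\stack{C_{2n}}\le t_0$, while the lower bound corollary shows that some configuration of size $t_0-1$ is not stackable. Together these pin the value at $t_0$. There is no real obstacle here: the substantive work was done in Theorem~\ref{tm:ash-even-cycle-upper} and in the lower bound of~\cite{CsSo26}.
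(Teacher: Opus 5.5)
Your proposal is correct and is the paper's argument: ASH~(1) together with Theorem~\ref{tm:ash-even-cycle-upper} gives $\stack{C_{2n}}\le 2^{n+1}-1$, and Corollary~\ref{cr:stack_c_2n} gives the reverse inequality. Your check that $t_0\ge2$, so that $t_0$ lies in the set over which ASH takes its minimum, fills in a detail the paper leaves implicit.
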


\begin{proof}
By Theorem~\ref{tm:ash-even-cycle-upper}, ASH implies
$\stack{C_{2n}}\le2^{n+1}-1$. Corollary~\ref{cr:stack_c_2n} gives the reverse
inequality.
\end{proof}

\begin{theorem}[ASH for odd cycles]\label{tm:ash-odd-cycle-equality}
Assume ASH for odd cycles. Then, for every $n\ge1$,
\begin{equation*}
\clear{C_{2n+1}}=3\cdot2^n-2.
\end{equation*}
\end{theorem}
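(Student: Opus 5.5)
The argument mirrors the proof of Theorem~\ref{tm:ash-even-cycle-equality}: an upper bound from ASH combined with the known lower bound. By Corollary~\ref{cr:clear-odd-cycle-lower}, every $n\ge1$ satisfies $\clear{C_{2n+1}}\ge3\cdot2^n-2$, and this part needs no hypothesis. For the reverse inequality, part~(2) of Hypothesis~\ref{HY_almoststacked} applies to the connected non-bipartite graph $C_{2n+1}$. It says that $\clear{C_{2n+1}}$ is the least integer $t\ge2$ such that every almost stacked configuration in $\conf{C_{2n+1}}{t}$ is clearable.

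Put $t_0=3\cdot2^n-2$. Since $n\ge1$, we have $t_0\ge4\ge2$. Theorem~\ref{tm:clearoddcircuit} states that every almost stacked configuration of size $t_0$ on $C_{2n+1}$ is clearable, because its size is at least $3\cdot2^n-2$. Hence $t_0$ belongs to the set of integers $t\ge2$ at which all almost stacked configurations of size $t$ are clearable. The least such integer, which ASH identifies with $\clear{C_{2n+1}}$, is therefore at most $t_0$.

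Combining the two inequalities gives $\clear{C_{2n+1}}=3\cdot2^n-2$. There is no real obstacle here, since the substantive work is already in Theorem~\ref{tm:clearoddcircuit}. The one point to check is that ASH is phrased as a least-threshold statement, so exhibiting a single good size $t_0$ bounds the threshold from above. We do not need the good sizes to form an upward-closed set. In any case they do, because Theorem~\ref{tm:clearoddcircuit} covers every size $\ge t_0$, and also $t_0>2n+1=|V(C_{2n+1})|$, so Lemma~\ref{lm:threshold-monotonicity} would give the same conclusion.
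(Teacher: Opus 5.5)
Your proposal is correct and follows essentially the same route as the paper: ASH part~(2) combined with Theorem~\ref{tm:clearoddcircuit} gives $\clear{C_{2n+1}}\le 3\cdot2^n-2$, and Corollary~\ref{cr:clear-odd-cycle-lower} gives the reverse inequality. Your extra remarks ($t_0\ge4$, and that the least-threshold phrasing of ASH makes a single good size $t_0$ sufficient) are accurate but not needed.
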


\begin{proof}
By Theorem~\ref{tm:clearoddcircuit}, ASH implies
$\clear{C_{2n+1}}\le3\cdot2^n-2$. Corollary~\ref{cr:clear-odd-cycle-lower}
gives the reverse inequality.
\end{proof}

\section{Tree Applications}

\begin{definition}[The tree estimate]
Let $T$ be a finite tree and let $r\in V(T)$. Define
$\operatorname{leaf}_T(r)$ to be the number of leaves of $T$ different from
$r$. All degrees and distances below are taken in $T$.
Define also
\begin{equation*}
{\rho}_T(r)=
\sum\{\deg(v)\cdot 2^{\operatorname{dist}_T(r,v)}:
v\in V(T),\ v=r \text{ or } \deg(v)>1\}.
\end{equation*}
Finally, let
\begin{equation*}
\operatorname{estim}(T)=
\max\left\{2,\;
1+\max\{{\rho}_T(r)+\operatorname{leaf}_T(r):r\in V(T)\}\right\}.
\end{equation*}

If $c$ is a configuration on $T$ and $r\in V(T)$, let $T(c,r)$ denote the
minimal subtree of $T$ containing $r$ and $\supp{c}$.
\end{definition}

\begin{theorem}[Tree lower bound]\label{tm:tree}
If $T$ is a finite tree, then $\stack{T}\ge \operatorname{estim}(T)$.
\end{theorem}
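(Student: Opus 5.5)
The plan is to exhibit, for a vertex $r$ attaining the inner maximum in $\operatorname{estim}(T)$, an explicit non-stackable configuration of size $\rho_T(r)+\operatorname{leaf}_T(r)=\operatorname{estim}(T)-1$. If the maximum in the definition is $2$, there is nothing to prove, since $\stack{T}\ge2$ by definition. The candidate is the almost stacked configuration
\begin{equation*}
c=\rho_T(r)\cdot e_r\oplus\bigoplus_{\ell}e_\ell,
\end{equation*}
where $\ell$ ranges over the leaves of $T$ different from $r$. This choice is forced by the path case, which I will check first as a sanity test. For $T=P_n$ and $r=v_1$ we get
\begin{equation*}
\rho_T(r)=1+\sum_{i=2}^{n-1}2\cdot2^{i-1}=2^n-3
\qquad\text{and}\qquad
\operatorname{leaf}_T(r)=1.
\end{equation*}
So $c=\econf{2^n-3}{n}$, which is non-stackable by Theorem~\ref{tm:path-lower}, since $2^n-3<2^n$ and $2^n-3\equiv2^n\pmod3$. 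This recovers the bound of Theorem~\ref{tm:pn}.

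For a general tree I would argue by contradiction. Suppose a pebbling sequence takes $c$ to a configuration stacked at some vertex $v$. The first step is the standard no-cycle reduction: choose a shortest such sequence and show that on every edge of $T$ all moves go in one direction. Two opposite moves on one edge cost three pebbles and can be cancelled or re-routed. Since $T$ is a tree, the directed multigraph of moves is then acyclic with a well-defined flow on each edge.

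Every leaf $\ell\ne v$ starts with one pebble and must end empty, so $\ell$ must receive a pebble and later send moves out of it. Every internal vertex $u\ne v$ on the relevant subtree $T(c,r)$ must end empty. Counting inflow and outflow at each vertex (outflow $=\tfrac12$(initial $+$ inflow) with integrality) yields a lower bound on the number of pebbles that must originate at $r$. This bound should take the form of a weighted sum over the non-leaf vertices, each weighted by $\deg(u)\cdot2^{\operatorname{dist}(r,u)}$ and reduced by the leaf pebbles' contributions.

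The main obstacle is this counting step. The target $v$ is arbitrary, and the flow need not be directed away from $r$, so the simple bound $2^{\operatorname{dist}}$ is not enough. I would first try to handle it with the mod-$3$ weight invariant $R(x)=2^{\operatorname{dist}(r,x)}$, as used for paths and cycles. Moves away from $r$ preserve $w_R$ and moves toward $r$ lower it by a multiple of $3$. I would combine this with the observation that each degree-$d$ branch vertex must be filled once for each of its $d$ incident branches used by the flow. If the general counting resists, the fallback is induction on the number of leaves. I would remove a pendant path ending at a leaf $\ell$ and compare $\rho$ and $\operatorname{leaf}$ before and after. Then I would use Theorem~\ref{tm:path-lower} on the pendant path to show that the extra pebbles required to clear $\ell$ are exactly the change in $\rho_T(r)+\operatorname{leaf}_T(r)$.
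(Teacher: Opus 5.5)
Your configuration is exactly the right one, and your check on $P_n$ is correct. However, the proposal leaves open the only substantial step, the proof that $c$ is not stackable, and both routes you sketch for it break down.

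First, the no-cycle reduction is false for stacking. Deleting a pair of opposite moves $\pmove{u}{v}$, $\pmove{v}{u}$ leaves one extra pebble at each of $u$ and $v$ in the final configuration, so the result is no longer stacked. Your own next sentence already contradicts it: an occupied leaf $\ell$ other than the final vertex must first receive a pebble and then fire back along the same edge.

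Second, the conclusion your counting step is meant to produce, a lower bound on the number of pebbles that must start at $r$, is false. Non-stackability here is a congruence phenomenon, not a monotone one. On $P_3$ with $r=v_1$ we have $\rho_T(r)=5$, and $5e_{v_1}\oplus e_{v_3}$ is not stackable by Theorem~\ref{tm:path-lower}. Yet $4e_{v_1}\oplus e_{v_3}$ is stackable: two moves $\pmove{v_1}{v_2}$, then $\pmove{v_2}{v_3}$. For the same reason, the ``extra pebbles required to clear $\ell$'' in your inductive fallback is not a well-defined quantity.

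What is missing is to use the weight $w_R$ quantitatively, not just modulo $3$. Count each vertex once as a child and once for each of its children, over parent--child edges $u\ll v$. This gives the exact identity $w_R(c)=\rho_T(r)+\sum_{\ell}R(\ell)=\sum_{u\ll v}\bigl(R(u)+R(v)\bigr)=\sum_{u\ll v}3R(u)$, and this identity is what calibrates $\rho_T(r)$. A move $\pmove{u}{v}$ away from $r$ loses no weight, and a move $\pmove{v}{u}$ loses exactly $3R(u)$.

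Suppose $c$ reached a configuration $c'$ stacked at $s$, with $\operatorname{dist}_T(r,s)=m$. Then $w_R(c')=c'(s)2^m\equiv0\pmod 3$ forces $c'(s)\ge3$, so $w_R(c')\ge3\cdot2^m$. On the other hand, take any edge $u\ll v$ off the $r$--$s$ path. The subtree below $v$ contains an occupied leaf and must end empty, and only a move $\pmove{v}{u}$ can empty it. Charging $3R(u)$ for each such edge leaves $w_R(c')\le\sum_{i<m}3\cdot2^i=3(2^m-1)$, a contradiction.

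This argument works for an arbitrary pebbling sequence. The inward moves you tried to eliminate are precisely what it charges for.

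Finally, one non-stackable configuration of size $\operatorname{estim}(T)-1$ does not by itself give $\stack{T}\ge\operatorname{estim}(T)$, because $\stack{T}$ is defined by the behaviour at a single size. You also need $\stack{T}\ge|V(T)|+1$, from terminal $0$--$1$ configurations, together with Lemma~\ref{lm:threshold-monotonicity}, to rule out $\stack{T}<\operatorname{estim}(T)-1$.
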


\begin{proof}
If $|V(T)|=1$, then $\stack{T}=2=\operatorname{estim}(T)$. We may therefore
assume that $|V(T)|\ge2$. In this case the second term in the definition of
$\operatorname{estim}(T)$ is larger than two: for every root $r$ we have
$\rho_T(r)\ge\deg_T(r)\ge1$, and $T$ has a leaf different from $r$. Thus we
may choose a root
$r\in V(T)$ such that
\begin{equation*}
\operatorname{estim}(T)-1={\rho}_T(r)+\operatorname{leaf}_T(r).
\end{equation*}
Let $\Leaf(T)$ be the set of leaves of $T$ different from $r$. Root $T$ at
$r$, and write $\topp{u}{v}$ if $uv\in E(T)$ and $u$ is the predecessor of
$v$ in this rooted tree.

Let $c$ be the configuration on $T$ defined by
\begin{equation}\label{eq:c}
c(u)=
\begin{cases}
1, & \text{if } u\in \Leaf(T),\\
{\rho}_T(r), & \text{if } u=r,\\
0, & \text{otherwise.}
\end{cases}
\end{equation}
Then
\begin{equation}\label{eq:norm}
\norm{c}={\rho}_T(r)+|\Leaf(T)|=\operatorname{estim}(T)-1.
\end{equation}
We will show that $c$ is not stackable.

Define
\begin{equation*}
R(v)=2^{\operatorname{dist}_T(r,v)}
\qquad (v\in V(T)).
\end{equation*}
Given a configuration $d$ and an applicable pebbling step $\pmove{x}{y}$,
define its $R$-loss by
\begin{equation*}
\ell_R(\pmove{x}{y})=2R(x)-R(y).
\end{equation*}
Thus
\begin{equation*}
w_R\bigl(\poper{x}{y}{d}\bigr)=w_R(d)-\ell_R(\pmove{x}{y}).
\end{equation*}
For an edge $\topp{u}{v}$ we have $R(v)=2R(u)$, and hence
\begin{equation}\label{eq:tree-loss}
\ell_R(\pmove{u}{v})=0
\qquad\text{and}\qquad
\ell_R(\pmove{v}{u})=3R(u).
\end{equation}

The initial $R$-weight is
\begin{align}
w_R(c)
&={\rho}_T(r)+\sum_{v\in\Leaf(T)} R(v) \notag\\
&=\sum\{R(u)+R(v): \topp{u}{v}\} \notag\\
&=\sum\{3R(u): \topp{u}{v}\}. \label{eq:tree-weight}
\end{align}
For the middle equality, observe that the root is counted once for each child,
an internal non-root vertex is counted once as a child and once for each of
its children, and a non-root leaf is supplied by the final sum in the first
line.
In particular, $w_R(c)\equiv0\pmod 3$. Since every loss in
equation~\eqref{eq:tree-loss} is divisible by $3$, the $R$-weight is preserved
modulo $3$ during every pebbling sequence.

Suppose, toward a contradiction, that $c$ is stackable. Let
$c'\in\Pebl{c}{T}$ be stacked at $s$, and choose a pebbling sequence
\begin{equation*}
c=c_0,c_1,\dots,c_N=c'
\end{equation*}
with $c_{i+1}=\poper{x_i}{y_i}{c_i}$ for $i<N$.
Summing the losses along the sequence gives
\begin{equation}\label{eq:loss-sum}
w_R(c')+\sum_{i<N}\ell_R(\pmove{x_i}{y_i})=w_R(c).
\end{equation}

Since $c'$ is stacked at $s$,
\begin{equation*}
w_R(c')=c'(s)R(s).
\end{equation*}
The congruence $w_R(c')\equiv w_R(c)\equiv0\pmod 3$ and the fact that
$R(s)$ is a power of $2$ imply that $3$ divides $c'(s)$. As $c'(s)>0$, we have
\begin{equation}\label{eq:stack-weight-lower}
w_R(c')\ge 3R(s).
\end{equation}

Let
\begin{equation*}
r=w_0,w_1,\dots,w_m=s
\end{equation*}
be the path from $r$ to $s$, and let
\begin{equation*}
\mc F=\{\{w_i,w_{i+1}\}:0\le i<m\}.
\end{equation*}
We claim that for every edge $\topp{u}{v}$ with $\{u,v\}\notin\mc F$, the
sequence contains a move $\pmove{v}{u}$. Indeed, let $T_v$ be the rooted
subtree consisting of $v$ and all its descendants. Since $\{u,v\}\notin\mc F$,
the final stacking vertex $s$ does not belong to $T_v$. The subtree $T_v$
contains a leaf of $T$ different from $r$, so it contains a pebble in the
initial configuration, but no pebble in the final configuration. Consequently,
the step that first empties $T_v$ must move a pebble across the edge $uv$ from
$v$ to $u$. This proves the claim.

By~\eqref{eq:tree-loss}, the claim gives
\begin{equation}\label{eq:off-path-loss}
\sum_{i<N}\ell_R(\pmove{x_i}{y_i})
\ge \sum\{3R(u): \topp{u}{v},\ \{u,v\}\notin\mc F\}.
\end{equation}
Combining equations~\eqref{eq:loss-sum},~\eqref{eq:tree-weight},
and~\eqref{eq:off-path-loss}, we obtain
\begin{align*}
w_R(c')
&\le \sum\{3R(u): \topp{u}{v},\ \{u,v\}\in\mc F\}\\
&=\sum_{i=0}^{m-1}3\cdot 2^i\\
&=3(2^m-1).
\end{align*}
On the other hand, $R(s)=2^m$, so~\eqref{eq:stack-weight-lower} gives
\begin{equation*}
w_R(c')\ge 3\cdot 2^m,
\end{equation*}
a contradiction.

Thus $c$ is a non-stackable configuration of size
$\operatorname{estim}(T)-1$. To pass from this obstruction to a threshold
bound, note first that $\stack{T}\ge |V(T)|+1$: for every
$2\le t\le |V(T)|$, placing one pebble on each of $t$ distinct vertices gives
a terminal non-stacked configuration. If, contrary to the conclusion, we had
$\stack{T}<\operatorname{estim}(T)$, then
$\stack{T}>|V(T)|$. By definition, every configuration of size $\stack{T}$ is
stackable, and Lemma~\ref{lm:threshold-monotonicity} extends this conclusion to
every larger size. Since
$\operatorname{estim}(T)-1\ge\stack{T}$, every configuration of size
$\operatorname{estim}(T)-1$ would therefore be stackable. This contradicts the
configuration $c$ constructed above. Hence
$\stack{T}\ge \operatorname{estim}(T)$.
\end{proof}

The preceding theorem gives the lower bound unconditionally. We do not know how
to prove the matching upper bound for arbitrary configurations. The next result
proves the required upper bound for almost stacked configurations, and so gives
the matching upper bound under the Almost Stacked Hypothesis.

\begin{theorem}[Almost stacked configurations on trees]\label{tm:tree-almost-stacked}
Let $T$ be a finite tree, let $r\in V(T)$, and let $c$ be a configuration on
$T$ that is almost stacked at $r$. If, for $S=T(c,r)$,
\begin{equation*}
\norm{c}\ge {\rho}_S(r)+\operatorname{leaf}_S(r)+1,
\end{equation*}
then $c$ is stackable at $r$.
\end{theorem}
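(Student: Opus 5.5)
The plan is to prove a tree version of the Pullback Lemma (Lemma~\ref{lm:pullback}) by induction over the rooted tree $S=T(c,r)$, and then compare its cost with $c(r)$. Write $d=c\restriction(V(S)\setminus\{r\})$ and $q=|\supp{d}|$. Because $S$ is the minimal subtree containing $r$ and $\supp{c}$, every leaf of $S$ other than $r$ carries a pebble, so $q\ge\operatorname{leaf}_S(r)$. Because $c$ is almost stacked at $r$, we have $c(r)=\norm{c}-q$, and therefore
\begin{equation*}
c(r)\ge\rho_S(r)+\operatorname{leaf}_S(r)+1-q .
\end{equation*}
It then suffices to exhibit a pebbling sequence in $S$ that uses at most $P(d)$ pebbles from $r$, clears every vertex other than $r$, and leaves at least one pebble at $r$, where
\begin{equation*}
P(d)\le \rho_S(r)+\operatorname{leaf}_S(r)+1-q .
\end{equation*}
If $q=0$, then $c$ is already stacked at $r$. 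Moves inside $S$ are moves in $T$, so the sequence also works in $T$.

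The sequence comes from a recursive \emph{demand} function. For a non-root vertex $v$ with parent $u$ and subtree $S_v$, let $\delta(v)$ be the number of extra pebbles that must arrive at $v$ from $u$. With these pebbles, together with $c(v)$ and the single pebble returned by each child, $v$ should end with an even number $\ge2$ of pebbles. Vertex $v$ then sends pebbles up to $u$ in pairs: one pair at minimum, and surplus pairs when $v$ has many children. The root pays $2^{\operatorname{dist}(r,v)}$ pebbles for each unit of demand at $v$, using the elementary path operation \eqref{eq:elementary-path-operation} along the $r$--$v$ path, whose internal vertices are not yet disturbed at that stage. Concretely, the sequence has two phases. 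First, all demand pebbles are shipped down. Then all subtrees are collapsed bottom-up, and every returning pebble ends at $r$.

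The demand is as follows. A pebbled leaf has demand $1$. An internal vertex $v$ with $k$ children has demand $0$ or $1$, chosen to fix the parity of $k+c(v)$ and to make the total at least $2$. The total cost $\sum_v\delta(v)2^{\operatorname{dist}(r,v)}$, plus the pebble that must remain at $r$, is then compared with $\rho_S(r)=\sum\deg(v)2^{\operatorname{dist}(r,v)}$. The intended charging works as follows. Each vertex $v$ at distance $i$ with $v=r$ or $\deg(v)>1$ contributes $\deg(v)2^i$, which splits as one share $2^{i+1}$ for each child edge (plus, for $v\ne r$, one share $2^i$ for the parent edge). A leaf's demand cost $2^{\operatorname{dist}}$ is charged to half of the share of its parent edge. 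Each pebble already present at an internal vertex, or each extra returning child pebble, saves at least $2^{\operatorname{dist}}\ge2$. This is the same bookkeeping as in \eqref{eq:pullback-bound}, where every occupied internal vertex subtracts a term $\ge2$. The induction on the height of $S_v$ should establish
\begin{equation*}
\text{cost}(S_v)\le\rho_{S_v\cup\{u\}}(u)-2^{\operatorname{dist}(r,u)}\cdot(\ldots)+\operatorname{leaf}-q_v,
\end{equation*}
that is, a per-subtree version of the desired bound with the root term accounted separately. Summing over the children of $r$ and adding the $+1$ for the remaining pebble at $r$ gives the claim. As a check, when $S$ is a path the bound reduces exactly to $c(r)\ge 2^j-1-q\ge\pull{d}$ from Lemma~\ref{lm:pullback}.

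The main obstacle I expect is the parity and surplus behaviour at branching vertices. A vertex with $k\ge2$ children may receive an odd number of pebbles, and the leftover pebble must be absorbed. My first approach is to request one extra pebble from above, costing $2^{\operatorname{dist}}$, and to verify that the slack in $\deg(v)2^{\operatorname{dist}}$ (namely $(k-1)2^{\operatorname{dist}}$ beyond what the path-like charging uses) pays for it. If this slack is not enough, a fallback is to reduce the demand of one child by one instead: that child returns two pebbles' worth via an extra upward move, and the $R$-weight computation \eqref{eq:tree-loss} together with an exact accounting shows that such an extra upward move costs $3R(u)$. Comparing this with the identity \eqref{eq:tree-weight} should confirm that the total never exceeds $\rho_S(r)+\operatorname{leaf}_S(r)+1-q$.
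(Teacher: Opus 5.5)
There is a genuine gap, and it sits exactly at the branching vertices you flagged. Your demand scheme pays $2^{\operatorname{dist}(r,\ell)}$ root pebbles for every occupied leaf $\ell$, because it ships a fresh pebble all the way to $\ell$. In Phase~1 it also ships all demand before any pebble returns. The hypothesis does not cover either cost.

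First, take the star $K_{1,t}$ centred at $r$ with $t\ge2$ occupied leaves. Then $\rho_S(r)=\operatorname{leaf}_S(r)=q=t$, so $c(r)=t+1$ is allowed. Phase~1, however, needs $2t$ pebbles at $r$.

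Reusing returned pebbles does not rescue the scheme. Let $r$ be adjacent to $s$, let $s$ have three leaf children, and let $c(s)=0$. Then $\rho_S(r)=1+4\cdot2=9$, so $c(r)=10$ is allowed. Your demands are $4$ for each leaf and $2$ for $s$ (parity), a total of $14$. Only $2$ pebbles come back to $r$, so even the net cost $12$ exceeds $10$.

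The charging is where this is hidden. $\deg(v)2^i$ splits as one share $2^i$ per incident edge, not $2^{i+1}$ per child edge. For a non-root vertex with $k$ children, your split totals $(2k+1)2^i$ rather than $(k+1)2^i$. So a leaf at depth $i+1$ has only the share $2^i$ of its parent edge available locally, which is half of what you spend on it. On a path, the single leaf's excess $2^{j-1}$ is paid by the child-edge shares $1+2+\cdots+2^{j-2}$ higher up plus the $+1$, which is why your path check works. When several leaves hang below one vertex, those shares would have to be spent more than once.

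Your inductive inequality for the cost of $S_v$ is also left as a placeholder. The appeal to \eqref{eq:tree-weight} cannot close it: that $R$-weight identity bounds losses from below and serves the non-stackability argument of Theorem~\ref{tm:tree}. It gives no upper bound on the root pebbles required.

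The missing idea is to use the parent of the leaves as a catalyst and to interleave shipping with collapsing. Suppose $s$ lies at depth $k$ and holds one pebble, and $y$ is a leaf child. Send a single pebble only as far as $s$, at cost $2^k$, then perform $\pmove{s}{y}$ and $\pmove{y}{s}$. This clears $y$, leaves one pebble on $s$, and costs exactly the share $2^k$ of the edge $sy$. If $c(s)=0$, the first leaf is instead reached directly for $2^{k+1}$ and its pebble moved back to $s$; this also consumes the share of the parent edge of $s$.

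The paper organizes this as an induction on $|V(S)|$:
\begin{enumerate}[(1)]
\item Choose a deepest vertex $s$ that has children; all of them are occupied leaves.
\item Strip these children as above.
\item Check that, with $n=\deg_S(s)$, $\rho$ drops by $n2^k$ and $\operatorname{leaf}$ drops by $n-2$.
\item Check that at most $n2^k+n-2$ moves are used, each costing one pebble of $\norm{c}$, so the hypothesis passes to $T(c',r)$.
\end{enumerate}
The case $s=r$ is the star, handled by $\pmove{r}{x}$, $\pmove{x}{r}$ for each leaf $x$.
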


\begin{proof}
We argue by induction on $|V(S)|$. If $S$ consists only of $r$, then the
hypothesis gives $\norm{c}\ge1$. Since $\supp{c}\subseteq\{r\}$, the
configuration $c$ is stacked at $r$.

Root $S$ at $r$, and choose a vertex $s$ of maximum distance from $r$ among
the vertices that have at least one child. Every non-root leaf of $S$ belongs
to $\supp{c}$, since otherwise it could be deleted from the minimal subtree
$T(c,r)$. Since $c$ is almost stacked at $r$, every such leaf carries exactly
one pebble. By the choice of $s$, all children of $s$ are leaves and therefore
carry one pebble.

Suppose first that $s=r$. Then $S$ is a star with center $r$. Put
$t=\deg_S(r)$. The hypothesis gives
\begin{equation*}
c(r)+t=\norm{c}\ge {\rho}_S(r)+\operatorname{leaf}_S(r)+1=2t+1,
\end{equation*}
so $c(r)\ge t+1$. For each leaf $x$, perform
\begin{equation*}
\pmove{r}{x},\qquad \pmove{x}{r}.
\end{equation*}
Each pair of moves clears $x$ and decreases the number of pebbles at $r$ by
one. After all leaves have been cleared, at least one pebble remains at $r$.

We may therefore assume that $s\ne r$. Put
\begin{equation*}
k=\operatorname{dist}_S(r,s),
\qquad
n=\deg_S(s).
\end{equation*}
Thus $s$ has $n-1$ children, all of which are leaves.

By the elementary path operation~\eqref{eq:elementary-path-operation}, if a
vertex $u$ is at distance $h$ from $r$, then $2^h$ pebbles at $r$ can place one
additional pebble at $u$ by moving only away from $r$ along the $r$--$u$ path
and leaving all pre-existing pebbles on that path in place.

Let
\begin{equation*}
q=|\supp{c}\setminus\{r\}|,
\qquad
L=\operatorname{leaf}_S(r).
\end{equation*}
Every non-root leaf of $S$ is occupied, so $q-L$ is the number of occupied
internal vertices different from $r$. The term of ${\rho}_S(r)$ corresponding
to $s$ is $n\cdot2^k$, and every other occupied internal vertex contributes at
least one. Consequently, if $c(s)=0$, then
\begin{equation*}
{\rho}_S(r)\ge n\cdot2^k+q-L,
\end{equation*}
whereas if $c(s)=1$, then
\begin{equation*}
{\rho}_S(r)\ge n\cdot2^k+q-L-1.
\end{equation*}
Since
\begin{equation*}
c(r)=\norm{c}-q
\ge {\rho}_S(r)+L+1-q,
\end{equation*}
the preceding estimates give
\begin{equation}\label{eq:tree-root-resources}
c(r)\ge
\begin{cases}
n\cdot2^k+1, & c(s)=0,\\
n\cdot2^k, & c(s)=1.
\end{cases}
\end{equation}
Thus there are enough pebbles at $r$ for the operations described below: they
use $n\cdot2^k$ root pebbles when $c(s)=0$, and
$(n-1)2^k\le n2^k$ root pebbles when $c(s)=1$.

If $c(s)=0$, choose one child $x$ of $s$. Use $2^{k+1}$ pebbles at $r$ to
place one additional pebble at $x$, and then perform $\pmove{x}{s}$. This
clears $x$ and leaves one pebble at $s$. For each remaining child $y$ of $s$,
use $2^k$ pebbles at $r$ to place one additional pebble at $s$, and then
perform
\begin{equation*}
\pmove{s}{y},\qquad \pmove{y}{s}.
\end{equation*}
This clears $y$ and again leaves one pebble at $s$. If $c(s)=1$, apply this
latter procedure to every child of $s$.

Let $c'$ be the resulting configuration and let $S'=T(c',r)$. Then $c'$ is
almost stacked at $r$, and $S'$ is obtained from $S$ by deleting the $n-1$
children of $s$. Hence
\begin{equation*}
{\rho}_S(r)-{\rho}_{S'}(r)=n\cdot2^k
\qquad\text{and}\qquad
\operatorname{leaf}_S(r)-\operatorname{leaf}_{S'}(r)=n-2.
\end{equation*}

Moving one pebble to distance $h$ by the elementary path operation uses
$2^h-1$ pebbling steps. Therefore, if $c(s)=0$, the number of steps used above
is
\begin{equation*}
2^{k+1}+(n-2)(2^k+1)=n\cdot2^k+n-2.
\end{equation*}
If $c(s)=1$, the number is
\begin{equation*}
(n-1)(2^k+1)\le n\cdot2^k+n-2,
\end{equation*}
because $k\ge1$. Since every pebbling step decreases the size of the
configuration by one, it follows that
\begin{align*}
\norm{c'}
&\ge \norm{c}-n\cdot2^k-n+2\\
&\ge {\rho}_{S'}(r)+\operatorname{leaf}_{S'}(r)+1.
\end{align*}
Since $|V(S')|<|V(S)|$, the induction hypothesis shows that $c'$ is stackable
at $r$. Hence $c$ is stackable at $r$.
\end{proof}

\begin{theorem}[Tree consequence of ASH]\label{tm:tree-ash}
If ASH holds, then every finite tree $T$ satisfies
\begin{equation*}
\stack{T}\le \operatorname{estim}(T).
\end{equation*}
\end{theorem}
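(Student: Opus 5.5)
By ASH(1) it suffices to show that every almost stacked configuration $c$ on $T$ with $\norm{c}=\operatorname{estim}(T)$ is stackable. ASH identifies $\stack{T}$ with the least $t\ge2$ at which every almost stacked configuration of size $t$ is stackable. Thus, once all almost stacked configurations of size $\operatorname{estim}(T)$ are stackable, $\stack{T}\le\operatorname{estim}(T)$ follows.

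Care is needed here. ASH speaks of the \emph{least} $t$, so it does not by itself give monotonicity in $t$. The safe route is to prove stackability for every size $t\ge\operatorname{estim}(T)$ directly, not only at $t=\operatorname{estim}(T)$. Either reading of "least" then yields the bound. The argument below is uniform in $\norm{c}$, so this costs nothing extra.

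Fix $c$ almost stacked at some vertex $r$ with $\norm{c}\ge\operatorname{estim}(T)$. Let $S=T(c,r)$. I will apply Theorem~\ref{tm:tree-almost-stacked}, which requires
\begin{equation*}
\norm{c}\ge\rho_S(r)+\operatorname{leaf}_S(r)+1 .
\end{equation*}
Since $\norm{c}\ge1+\rho_T(r)+\operatorname{leaf}_T(r)$ by the definition of $\operatorname{estim}(T)$, it suffices to establish the monotonicity
\begin{equation*}
\rho_S(r)+\operatorname{leaf}_S(r)\le\rho_T(r)+\operatorname{leaf}_T(r)
\end{equation*}
for every subtree $S\subseteq T$ containing $r$.

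I prove this monotonicity by removing from $T$, one at a time, a leaf $x\ne r$ of the current tree, until $S$ remains. It is enough to check that a single deletion does not increase $\rho+\operatorname{leaf}$. Let $x$ be a leaf with neighbour $y$ at distance $h$ from $r$, so that $x$ is at distance $h+1$.

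In the generic case $y\ne r$ and $\deg(y)\ge3$. Deleting $x$ lowers $\deg(y)$ by one, which lowers $\rho$ by $2^h$. The leaf count drops by one. The sum decreases.

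In the case $y\ne r$ and $\deg(y)=2$, the vertex $y$ becomes a leaf after the deletion. Its contribution $2\cdot2^h$ leaves $\rho$ entirely, the leaf count is unchanged, and the sum decreases.

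In the case $y=r$, the root term of $\rho$ drops from $\deg(r)$ to $\deg(r)-1$, so $\rho$ decreases by $1$. The leaf count drops by one. If $r$ now has degree $0$ the sum is $0$, which is fine. If the deletion makes $r$ itself a leaf, this does not matter, because $\operatorname{leaf}$ excludes $r$ and $r$ is always counted in $\rho$.

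The degenerate case $|V(T)|=1$ is handled by $\operatorname{estim}(T)=2$. Theorem~\ref{tm:tree-almost-stacked} then shows that $c$ is stackable at $r$, and the result follows.

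The main obstacle is this local bookkeeping of $\rho$ under leaf deletion, in particular the boundary cases where the neighbour is the root or becomes a leaf. Each case is a one-line check.
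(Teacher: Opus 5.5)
Your proposal is correct and follows the paper's proof. You reduce via ASH to almost stacked configurations, prove $\rho_S(r)+\operatorname{leaf}_S(r)\le\rho_T(r)+\operatorname{leaf}_T(r)$ by a one-leaf-at-a-time case check, and then apply Theorem~\ref{tm:tree-almost-stacked}; you delete leaves from $T$ where the paper adjoins them to $S$, but it is the same three-case computation run in reverse. Your extra treatment of all sizes $t\ge\operatorname{estim}(T)$ is harmless but unnecessary: under the literal ``least $t$'' reading, stackability of every almost stacked configuration of size exactly $\operatorname{estim}(T)$ already gives the bound, as your own first paragraph observes.
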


\begin{proof}
By ASH, to prove the upper bound for $\stack{T}$ it is enough to consider
almost stacked configurations. Let $c$ be almost stacked at $r$, suppose that
\(\norm{c}=\operatorname{estim}(T)\), and put $S=T(c,r)$.

For every subtree $S$ of $T$ containing $r$,
\begin{equation}\label{eq:tree-estimate-monotone}
{\rho}_S(r)+\operatorname{leaf}_S(r)
\le {\rho}_T(r)+\operatorname{leaf}_T(r).
\end{equation}
Indeed, one can obtain $T$ from $S$ by adjoining leaves one at a time, and
each such addition can only increase the left-hand expression. More
explicitly, suppose that a new leaf $x$ is attached to a vertex $u$. If
$u=r$, then the expression increases by two. If $u\ne r$ is a leaf before the
addition, then $u$ ceases to be a leaf, $x$ replaces it as a leaf, and the new
term $2\cdot2^{\operatorname{dist}_T(r,u)}$ is added to $\rho$. If $u$ is
already an internal vertex, then $\rho$ increases by
$2^{\operatorname{dist}_T(r,u)}$ and the leaf count increases by one. Therefore
\begin{align*}
\norm{c}
&=\operatorname{estim}(T)\\
&\ge {\rho}_T(r)+\operatorname{leaf}_T(r)+1\\
&\ge {\rho}_S(r)+\operatorname{leaf}_S(r)+1.
\end{align*}
Theorem~\ref{tm:tree-almost-stacked} shows that $c$ is stackable. Hence ASH
gives $\stack{T}\le\operatorname{estim}(T)$.
\end{proof}

\begin{corollary}[Tree formula under ASH]\label{cr:tree-formula-ash}
Assume ASH\@. Then every finite tree $T$ satisfies
\begin{equation*}
\stack{T}=\operatorname{estim}(T).
\end{equation*}
\end{corollary}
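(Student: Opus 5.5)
The corollary is the conjunction of two inequalities already available, so the plan is simply to combine them. Theorem~\ref{tm:tree} gives $\stack{T}\ge\operatorname{estim}(T)$ for every finite tree $T$ with no hypothesis at all. Theorem~\ref{tm:tree-ash} gives, under ASH, the reverse inequality $\stack{T}\le\operatorname{estim}(T)$. Putting the two together yields $\stack{T}=\operatorname{estim}(T)$.

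The only care needed is in how Theorem~\ref{tm:tree-ash} is read. It shows that every almost stacked configuration of size exactly $\operatorname{estim}(T)$ is stackable. ASH, however, characterizes $\stack{T}$ as the \emph{least} $t\ge2$ such that every almost stacked configuration of size $t$ is stackable. So that theorem bounds the least such $t$ by $\operatorname{estim}(T)$, and ASH turns this into $\stack{T}\le\operatorname{estim}(T)$. No monotonicity over larger sizes is needed for this direction, since only the least threshold matters.

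The trivial tree with one vertex is consistent with this. There $\operatorname{estim}(T)=2=\stack{T}$, as already noted in the proof of Theorem~\ref{tm:tree}.

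I do not expect any step to be a genuine obstacle. The whole argument is bookkeeping between the definition of $\stack{T}$ and the threshold formulation in ASH.
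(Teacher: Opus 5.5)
Your proposal is correct and is the paper's proof: combine the unconditional lower bound of Theorem~\ref{tm:tree} with the ASH-conditional upper bound of Theorem~\ref{tm:tree-ash}. Your remark on the ``least threshold'' reading of ASH is also accurate. It matches how the proof of Theorem~\ref{tm:tree-ash} applies ASH after checking only almost stacked configurations of size exactly $\operatorname{estim}(T)$.
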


\begin{proof}
Combine Theorems~\ref{tm:tree} and~\ref{tm:tree-ash}.
\end{proof}

\begin{conjecture}\label{conj:tree-equality}
If $T$ is a finite connected tree, then
\begin{equation*}
\stack{T}=\operatorname{estim}(T).
\end{equation*}
\end{conjecture}

\section{Perfectly Pebblable Graphs}\label{sc:perfectly-pebblable}

\subsection{Basic properties}

The clearing number of a connected non-bipartite graph cannot be smaller than
the number of vertices plus one. Indeed, if \(n=|V(G)|\), then, for each
integer \(t\) with \(2\le t\le n\), the configuration that places one pebble
on each of \(t\) distinct vertices is terminal and is not clearable. This
motivates the following definition.

\begin{definition}\label{df:perfectly-pebblable}
A finite connected non-bipartite graph \(G\) is \emph{perfectly pebblable} if
\begin{equation*}
\clear{G}=|V(G)|+1.
\end{equation*}
\end{definition}

Thus, perfectly pebblable graphs are precisely the graphs whose clearing
number is as small as possible. By Theorem~\ref{tm:complete-graph}, every
complete graph with at least three vertices is perfectly pebblable.

Perfect pebblability is preserved by the addition of edges without changing
the vertex set. Indeed, if \(H\) is a spanning subgraph of \(G\) and \(H\) is
perfectly pebblable, then every pebbling move available in \(H\) is also
available in \(G\). Thus
\(\clear{G}\le\clear{H}=|V(G)|+1\), while the reverse inequality follows from
the preceding lower bound.

Perfect pebblability also imposes the following Hamiltonicity condition.

\begin{proposition}\label{pr:perfect-Hamiltonian}
If \(G\) is perfectly pebblable, then, for every \(u\in V(G)\), the graph \(G\)
has a Hamiltonian path starting at \(u\).
\end{proposition}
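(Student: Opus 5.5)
The plan is to use the hypothesis $\clear{G}=|V(G)|+1$ on one specific configuration. Let $n=|V(G)|$, fix $u\in V(G)$, and consider
\begin{equation*}
c=2\cdot e_u\oplus\bigoplus_{v\ne u}e_v ,
\end{equation*}
that is, two pebbles on $u$ and one pebble on every other vertex. Then $\norm{c}=n+1=\clear{G}$, so $c$ is clearable. Every pebbling step decreases the size by exactly one. Hence a clearing sequence for $c$ consists of exactly $n$ steps, and no configuration before the last one may be terminal. We will read a Hamiltonian path starting at $u$ off such a sequence.

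The key step is an invariant. We claim that for $0\le k<n$, the configuration $c_k$ obtained after $k$ steps has the following form. There are distinct vertices $w_0=u,w_1,\dots,w_k$, with $w_{i+1}$ adjacent to $w_i$, such that $c_k(w_k)=2$, $c_k(w_i)=0$ for $i<k$, and $c_k(v)=1$ for all other $v$. As a consistency check, the size is $2+(n-k-1)=n+1-k$.

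We prove the claim by induction on $k$, with the case $k=0$ being $c$ itself. Suppose the claim holds for some $k<n-1$. The only vertex carrying at least two pebbles is $w_k$, so step $k+1$ is $\pmove{w_k}{x}$ for some neighbour $x$ of $w_k$. If $x$ is some $w_i$ with $i<k$, then $x$ receives one pebble and $w_k$ is emptied. The result is a terminal configuration of size $n-k\ge2$, which is impossible because the sequence must continue for $n$ steps in total. Hence $x$ is an as-yet-unvisited vertex carrying one pebble. Setting $w_{k+1}=x$ gives $c_{k+1}$ of the claimed form.

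Applying the invariant with $k=n-1$ gives $n$ distinct vertices $w_0=u,w_1,\dots,w_{n-1}$ with consecutive ones adjacent. This is a Hamiltonian path in $G$ starting at $u$. The only delicate point is the counting argument showing that every intermediate step must land on a vertex holding exactly one pebble. That follows because size drops by one per step, so the clearing sequence is forced to have length exactly $n$ and no intermediate configuration may be terminal.
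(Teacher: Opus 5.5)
Your proof is correct and follows the paper's argument essentially verbatim. You use the same configuration (two pebbles at $u$, one everywhere else), the same invariant tracking the unique doubled vertex, and the same observation that moving onto an already-visited vertex produces a terminal, uncleared configuration. Your size count ($n-k\ge 2$) simply makes explicit the paper's remark that such a configuration is terminal but not clearable.
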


\begin{proof}
Fix \(u\in V(G)\), and let \(c\) place two pebbles at \(u\) and one pebble at
every other vertex. Since \(\norm{c}=|V(G)|+1\), the configuration \(c\) is
clearable.

Consider a clearing sequence starting from \(c\), and call the successive
locations of the unique pair of pebbles the \emph{visited vertices}.
Initially, \(u\) is the only vertex from which a pebbling move can be made.
More generally, an induction shows that, after the sequence has visited
distinct vertices
\begin{equation*}
u_1=u,u_2,\dots,u_t,
\end{equation*}
the current configuration has two pebbles at \(u_t\), no pebbles at
\(u_1,\dots,u_{t-1}\), and one pebble at every unvisited vertex. If
\(t<|V(G)|\) and the next move sends a pebble to a previously visited vertex,
the resulting configuration is terminal but not clearable. Hence a clearing
sequence must send the next pebble to an unvisited neighbor \(u_{t+1}\). It
follows inductively that the sequence visits every vertex before it clears.
Once every vertex has been visited, two pebbles remain at the last vertex,
and one final move clears the configuration. Therefore
\begin{equation*}
u_1u_2\cdots u_{|V(G)|}
\end{equation*}
is a Hamiltonian path starting at \(u\).
\end{proof}

We now give a condition under which the stacking and clearing numbers coincide.

\begin{definition}\label{df:edge-triangulated}
Call a graph \emph{edge-triangulated} if every edge belongs to a triangle.
Call a graph \emph{strongly edge-triangulated} if every pair of distinct
vertices has a common neighbor. In particular, every strongly
edge-triangulated graph is edge-triangulated. Notice that the strong condition
is imposed on all pairs of vertices, not only on adjacent pairs.
\end{definition}

We shall also use the elementary bound
\begin{equation}\label{eq:stack-order-lower}
\stack{G}\ge |V(G)|+1.
\end{equation}
Indeed, for each integer \(t\) with \(2\le t\le |V(G)|\), a configuration
that places one pebble on each of \(t\) distinct vertices is terminal and is
not stacked. Thus no integer at most \(|V(G)|\) has the defining universal
property of the stacking number.

\begin{lemma}\label{lm:edge-triangulated}
If \(G\) is a finite connected edge-triangulated graph with at least two
vertices, then
\begin{equation*}
\clear{G}=\stack{G}.
\end{equation*}
\end{lemma}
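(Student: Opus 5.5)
Note first that an edge-triangulated graph with an edge contains a triangle. Hence \(G\) is non-bipartite and \(\clear{G}\) is defined. The plan is to prove the two inequalities \(\clear{G}\le\stack{G}\) and \(\stack{G}\le\clear{G}\) separately, each by converting a successful sequence of one kind into one of the other kind. In both directions we first reduce to sizes above \(|V(G)|\). Both thresholds are at least \(|V(G)|+1\): this is \eqref{eq:stack-order-lower} for stacking and the remark opening this section for clearing. So if \(t\) denotes the threshold in question, Lemma~\ref{lm:threshold-monotonicity} shows that every configuration of any size \(\ge t\) has the corresponding property. It therefore suffices to show, for \(t\ge|V(G)|+1\): if every configuration of size \(t\) is stackable, then every configuration of size \(t\) is clearable, and conversely.

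\textbf{Stackable implies clearable.} Let \(c\) have size \(t\) and suppose \(c\) is stackable, reaching \(m\cdot e_v\) with \(m\ge1\). If \(m=1\) the configuration is already cleared. If \(m\equiv1,2\pmod3\), apply the observation following~\eqref{eq:three-move-reduction}, using any neighbor of \(v\) (one exists since \(G\) is connected with at least two vertices). If \(m\equiv0\pmod 3\), so \(m\ge3\), use a triangle \(v,x,y\) through some edge at \(v\). Perform \(\pmove{v}{x}\), leaving \(m-2\) pebbles at \(v\) and one at \(x\). Then send one more pebble to \(x\) by \(\pmove{v}{x}\), leaving \(m-4\) at \(v\) and two at \(x\), and perform \(\pmove{x}{y}\). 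Sizes change by one per move, so this is really a residue-adjustment device. Concretely: from \(3\) pebbles at \(v\), do \(\pmove{v}{x}\) to get \(e_v\oplus e_x\), which is stuck. Instead do \(\pmove{v}{x}\), \(\pmove{v}{y}\)? That needs 4 pebbles. So the cleaner route is to reduce \(m\) by multiples of three down to \(3\) or \(6\) and handle those directly on the triangle. For example, \(6\cdot e_v\to e_x\oplus e_y\oplus 2e_v\to 2e_x\oplus e_y\to 2e_y\to e_x\). The case \(m=3\) is the main worry. Since \(t\ge|V(G)|+1\), I would first try to avoid ending with a stack of size exactly \(3\): stop the stacking sequence before its last move, or show that stackability at size \(t\) can be rearranged through the triangle. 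A triangle move changes the residue of an \(R\)-weight and breaks the mod-\(3\) obstruction, which is exactly what edge-triangulation is meant to supply.

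\textbf{Clearable implies stackable.} Suppose every configuration of size \(t\) is clearable. Given \(c\) of size \(t\), I would not clear \(c\) itself. Instead, remove or adjust a small part of \(c\), clear the rest, and recombine. A cleaner idea: a cleared configuration \(e_v\) is itself stacked. So every clearable configuration is stackable, and \(\stack{G}\le\clear{G}\) is immediate.

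Hence the real content is the first direction. There the \textbf{main obstacle} is showing that a configuration of size \(t\ge\stack{G}\) cannot be forced to end only at stacks of size divisible by \(3\). I would handle it by taking a stacking sequence ending at \(m\cdot e_v\) with \(3\mid m\) and \(m\ge3\), and modifying its final moves through a triangle containing the last edge used. The move \(\pmove{u}{v}\) can be rerouted as \(\pmove{u}{w}\) followed by moves through \(w\), producing a stack of different residue or two pebbles on adjacent vertices that then clear. If that local surgery fails, I would fall back on applying the hypothesis to \(c\) with the extra slack \(t-|V(G)|\ge1\) to secure at least \(4\) pebbles on a triangle vertex, which clears via \(\pmove{v}{x}\), \(\pmove{v}{y}\), \(\pmove{x}{y}\)-type moves.
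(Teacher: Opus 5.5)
\textbf{Gap: the stack of size exactly three.} Your easy cases are fine. Every clearable configuration is stackable, which gives $\stack{G}\le\clear{G}$. Stacks of size $\not\equiv0\pmod 3$ clear by \eqref{eq:three-move-reduction}. Your sequence for $6e_v$ on a triangle $v,x,y$ is correct, so every stack of size divisible by three and at least $6$ is clearable. What remains is the one case that actually needs edge-triangulation: a stacking sequence from $c$ that ends at exactly $3e_v$. The configuration $3e_v$ is not clearable, since one move leaves two isolated pebbles, so you must look back into the sequence. Your proposal only gestures at this (``reroute the last move through $w$ \dots\ if that local surgery fails, fall back on \dots''). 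You never verify that the rerouting works. The fallback is not an argument either: nothing in the hypothesis lets you secure four pebbles on a triangle vertex. As written, this case is unproved.

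\textbf{The missing observation.} The final move is rigid. Since $G$ contains a triangle, $\norm{c}=t\ge|V(G)|+1\ge4$. Hence the sequence reaching $3e_v$ has a last move $\pmove{u}{v}$. This move empties $u$, adds one pebble to $v$, and touches nothing else. So the configuration just before it is exactly $2e_u\oplus2e_v$. No rerouting of earlier moves and no residue bookkeeping is needed. Take a common neighbor $w$ of $u$ and $v$, which exists by edge-triangulation, and perform $\pmove{u}{w}$, $\pmove{v}{w}$, $\pmove{w}{u}$; this leaves a single pebble at $u$.

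This is exactly the paper's argument. The paper first reduces every stack to size $1$, $2$ or $3$ by the three-move reduction and then applies the last-move analysis, which also makes your separate $6e_v$ case unnecessary. With this step supplied, your proof is complete.
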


\begin{proof}
Every clearable configuration is stackable, so
\(\stack{G}\le\clear{G}\). For the reverse inequality, let \(c\) be a
configuration of size \(\stack{G}\). Then \(c\) is stackable.

Starting from any stack of at least four pebbles, the three moves
\begin{equation*}
\pmove{u}{v},\qquad
\pmove{u}{v},\qquad
\pmove{v}{u},
\end{equation*}
where \(uv\in E(G)\), reduce the size of the stack by three. Repeating this
reduction while at least four pebbles remain leaves a stack of size one, two,
or three. In the second case, one further move produces a stack of size one.
Hence \(c\) reaches a stacked configuration \(d\) with
\(\norm{d}\in\{1,3\}\).

If \(\norm{d}=1\), then \(c\) is clearable. Suppose that \(\norm{d}=3\), with
the three pebbles at \(v\). Since
\begin{equation*}
\norm{c}=\stack{G}\ge |V(G)|+1\ge4,
\end{equation*}
the sequence from \(c\) to \(d\) contains at least one move. Let
\(\pmove{u}{v}\) be its final move. Immediately before this move there are two
pebbles at each of \(u\) and \(v\), and no pebbles elsewhere: the move
produces \(3e_v\) and decreases the size by one, so the preceding
configuration must be \(2e_u\oplus2e_v\). Because the edge
\(uv\) belongs to a triangle, there is a common neighbor \(w\) of \(u\) and
\(v\). The moves
\begin{equation*}
\pmove{u}{w},\qquad
\pmove{v}{w},\qquad
\pmove{w}{u}
\end{equation*}
produce a cleared configuration. Thus every configuration of size
\(\stack{G}\) is clearable, and therefore
\(\clear{G}\le\stack{G}\).
\end{proof}

\begin{corollary}\label{cr:perfect-stacking}
Let \(G\) be a finite connected edge-triangulated graph with at least two
vertices. Then \(G\) is perfectly pebblable if and only if
\begin{equation*}
\stack{G}=|V(G)|+1.
\end{equation*}
\end{corollary}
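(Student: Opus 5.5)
This corollary follows by combining Lemma~\ref{lm:edge-triangulated} with the definition of perfect pebblability. An edge-triangulated graph with at least one edge contains a triangle, so it is non-bipartite. Hence, for a connected edge-triangulated \(G\) with at least two vertices, both \(\clear{G}\) and \(\stack{G}\) are defined, and Definition~\ref{df:perfectly-pebblable} applies. The two-vertex hypothesis ensures that \(G\) has an edge, and therefore a triangle.

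The first step is to verify these preliminaries. Connectedness together with \(|V(G)|\ge2\) gives an edge. Edge-triangulation then puts that edge in a triangle, so \(G\) contains an odd cycle and is not bipartite. This means the hypotheses of Lemma~\ref{lm:edge-triangulated} are met, and \(G\) is the kind of graph to which Definition~\ref{df:perfectly-pebblable} applies.

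The second step is to invoke Lemma~\ref{lm:edge-triangulated}, which gives \(\clear{G}=\stack{G}\). Then \(G\) is perfectly pebblable if and only if \(\clear{G}=|V(G)|+1\), which by this equality holds if and only if \(\stack{G}=|V(G)|+1\). Both directions are therefore immediate.

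The argument has no real obstacle, since Lemma~\ref{lm:edge-triangulated} does all the work. The only point needing care is that the definition of perfect pebblability requires non-bipartiteness. This is why the triangle observation should be stated explicitly rather than left implicit.
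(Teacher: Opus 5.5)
Your proposal is correct and follows the paper's argument: the paper's proof likewise notes that an edge lying in a triangle makes \(G\) non-bipartite, so Definition~\ref{df:perfectly-pebblable} applies. It then invokes Lemma~\ref{lm:edge-triangulated} to replace \(\clear{G}\) by \(\stack{G}\).
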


\begin{proof}
Since \(G\) has an edge and every edge of \(G\) belongs to a triangle, \(G\)
is non-bipartite. The assertion now follows directly from
Lemma~\ref{lm:edge-triangulated} and
Definition~\ref{df:perfectly-pebblable}.
\end{proof}

\subsection{An ASH criterion}

The preceding result connects perfect pebblability directly with the Almost
Stacked Hypothesis.

\begin{corollary}\label{cr:perfect-ash}
Assume ASH, and let \(G\) be a finite connected edge-triangulated graph with at
least two vertices. If every almost stacked configuration on \(G\) of size
\(|V(G)|+1\) is stackable, then \(G\) is perfectly pebblable.
\end{corollary}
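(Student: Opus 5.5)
The plan is to combine ASH with Corollary~\ref{cr:perfect-stacking} and the lower bound~\eqref{eq:stack-order-lower}. Put \(n=|V(G)|\) and \(t_0=n+1\). By Corollary~\ref{cr:perfect-stacking}, it suffices to prove \(\stack{G}=n+1\). By~\eqref{eq:stack-order-lower} we already have \(\stack{G}\ge n+1\), so only the upper bound \(\stack{G}\le n+1\) remains.

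For the upper bound, use part~(1) of ASH. It says that \(\stack{G}\) is the least integer \(t\ge2\) such that every almost stacked configuration in \(\conf{G}{t}\) is stackable. By hypothesis, \(t=n+1\) has this property. Since \(n+1\ge3\ge2\), the least such integer is at most \(n+1\), and so \(\stack{G}\le n+1\).

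The only point needing care is how to read the hypothesis. The definition says ``least integer'' and is not phrased as a threshold. So we must check that ASH, read literally, identifies \(\stack{G}\) with the least \(t\) having the almost stacked property, without any requirement that the property persist at larger sizes. That literal reading is all we need: exhibiting one \(t\) with the property bounds the least one.

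If one prefers not to rely on that reading, there is a fallback. Lemma~\ref{lm:threshold-monotonicity} (with \(t=n+1>|V(G)|\)) upgrades ``every configuration of size \(n+1\) is stackable'' to all larger sizes. Under ASH, however, we only get that statement at size exactly \(n+1\), so the direct appeal to ASH(1) is the cleaner route.

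Combining the two bounds gives \(\stack{G}=n+1\). Since \(G\) is connected, edge-triangulated, and has at least two vertices, Corollary~\ref{cr:perfect-stacking} applies and shows that \(G\) is perfectly pebblable. There is no real obstacle here; the argument is a bookkeeping combination of results already stated.
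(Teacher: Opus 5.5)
Your proof is correct and matches the paper's argument: ASH(1) gives \(\stack{G}\le|V(G)|+1\), the bound~\eqref{eq:stack-order-lower} gives the reverse inequality, and Corollary~\ref{cr:perfect-stacking} finishes the proof. The ``fallback'' paragraph is unnecessary, since it does not supply an independent route, but it does not affect the main argument.
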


\begin{proof}
By ASH,
\begin{equation*}
\stack{G}\le |V(G)|+1.
\end{equation*}
The lower bound~\eqref{eq:stack-order-lower} gives
\(\stack{G}\ge |V(G)|+1\). Hence
\(\stack{G}=|V(G)|+1\), and
Corollary~\ref{cr:perfect-stacking} completes the proof.
\end{proof}

\begin{definition}\label{df:pathcover}
A \emph{path cover} of a graph $G$ is a family of pairwise vertex-disjoint
paths whose vertex sets cover $V(G)$. Paths consisting of a single vertex are
allowed. The \emph{path-cover number} $\theta(G)$ is the minimum size of a path
cover of $G$; we set $\theta(G)=0$ when $V(G)=\varnothing$.
\end{definition}

\begin{lemma}[Path-clearing lemma]\label{lm:path-clearing}
Let $G$ be strongly edge-triangulated, let
$\mc P=\{P_1,\dots,P_m\}$ be a family of pairwise vertex-disjoint paths in
$G$, and let
\begin{equation*}
v_0\in V(G)\setminus\bigcup_{i=1}^m V(P_i).
\end{equation*}
Suppose that $d$ is a configuration such that
\begin{equation*}
d(v)=
\begin{cases}
1, & v\in\displaystyle\bigcup_{i=1}^m V(P_i),\\
0, & v\notin\displaystyle\bigcup_{i=1}^m V(P_i)\cup\{v_0\},
\end{cases}
\qquad\text{and}\qquad
d(v_0)\ge5m+1.
\end{equation*}
Then $d$ is stackable at $v_0$.
\end{lemma}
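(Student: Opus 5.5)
The plan is an induction on $m$. Each step clears one path $P_i$ completely, leaves every other vertex's pebble count unchanged, and spends at most $5$ pebbles from $v_0$ net. The case $m=0$ is trivial: then $d=d(v_0)\cdot e_{v_0}$ with $d(v_0)\ge1$, so $d$ is already stacked at $v_0$. For the inductive step, let $P=P_m$ have vertices $x_1,\dots,x_k$ in order, with $k\ge1$. Every vertex of $P$ carries exactly one pebble, and $d(v_0)\ge 5m+1\ge6$. After clearing $P$, the new configuration satisfies the hypotheses with the family $\{P_1,\dots,P_{m-1}\}$ and at least $5(m-1)+1$ pebbles at $v_0$, so induction finishes the proof.

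\emph{Step 1: inject a pebble at $x_1$.} Since $G$ is strongly edge-triangulated, $v_0$ and $x_1$ have a common neighbour $w$; note that $w\ne v_0,x_1$. Perform $\pmove{v_0}{w}$ twice and then $\pmove{w}{x_1}$. This costs $4$ pebbles at $v_0$ and gives $x_1$ one extra pebble. The number of pebbles at $w$ returns to its original value, whatever that was ($0$ or $1$, and $w$ may lie on another path or even on $P$ itself). This independence from the occupancy of $w$ is the key point.

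\emph{Step 2: roll along the path.} Now $x_1$ has two pebbles and $x_2,\dots,x_k$ have one each. The moves $\pmove{x_i}{x_{i+1}}$ for $i=1,\dots,k-1$ clear $x_1,\dots,x_{k-1}$ and leave two pebbles at $x_k$. If $k=1$, nothing needs to be done.

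\emph{Step 3: return to $v_0$.} Let $u$ be a common neighbour of $x_k$ and $v_0$; again $u\ne v_0,x_k$, and $u$ is not one of the already cleared $x_i$ unless it is empty, which the case analysis allows. Perform $\pmove{x_k}{u}$, which clears $P$.
\begin{itemize}
\item If $u$ was empty, it now has one pebble. Perform $\pmove{v_0}{u}$ and then $\pmove{u}{v_0}$. The net cost is $2-1=1$ root pebble, and $u$ ends empty again.
\item If $u$ had one pebble, it now has two. Perform $\pmove{u}{v_0}$ and then $\pmove{v_0}{u}$. The net cost is again $1$, and $u$ ends with its single pebble.
\end{itemize}
In total Step~3 costs $1$ root pebble net, so clearing $P$ costs $5$.

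The main point to check is feasibility: $v_0$ must hold at least two pebbles whenever it makes a move. Before Step~1 it has at least $6$, so after Step~1 it has at least $2$. In the first subcase of Step~3, $v_0$ pays $2$ before regaining $1$; in the second it gains before paying. In either case $v_0$ ends with at least $5(m-1)+1\ge1$ pebbles. I expect the only delicate point to be this bookkeeping, because the auxiliary vertices $w$ and $u$ may carry pebbles from other paths (or from $P$ itself). It is handled by making every auxiliary manoeuvre restore the original count at $w$ and $u$.
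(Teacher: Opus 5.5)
Your proof is correct and follows essentially the same route as the paper: each path is cleared with the same five-pebble gadget, namely two moves from $v_0$ to a common neighbor of $v_0$ and the first path vertex, rolling the resulting pair along the path, and returning through a common neighbor of $v_0$ and the last vertex, repeated over the paths. The only difference is cosmetic: in the return step you split on whether that common neighbor is occupied, whereas the paper uses the move order $v_0\Rightarrow b$, $u_k\Rightarrow b$, $b\Rightarrow v_0$, which restores $b$ uniformly in both cases.
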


\begin{proof}
If $m=0$, then $d$ is already stacked at $v_0$. Assume that $m\ge1$.
We show how to clear one path using five pebbles from $v_0$. Fix
$P\in\mc P$, and write its vertices in path order as
\begin{equation*}
u_1,u_2,\dots,u_k.
\end{equation*}
Since $G$ is strongly edge-triangulated, there is a common neighbor $a$ of
$v_0$ and $u_1$, and there is a common neighbor $b$ of $v_0$ and $u_k$.
Perform the moves
\begin{equation*}
\pmove{v_0}{a},\qquad
\pmove{v_0}{a},\qquad
\pmove{a}{u_1},
\end{equation*}
followed by
\begin{equation*}
\pmove{u_i}{u_{i+1}}
\qquad (1\le i<k).
\end{equation*}
At this point the path is empty except for two pebbles at $u_k$. Now perform
\begin{equation*}
\pmove{v_0}{b},\qquad
\pmove{u_k}{b},\qquad
\pmove{b}{v_0}.
\end{equation*}
Since the graph is simple, each common neighbor is distinct from the two
vertices for which it was chosen. The net effect of the displayed moves is
\begin{equation*}
d(v_0)\longmapsto d(v_0)-5,
\qquad
d(u_i)\longmapsto0\quad(1\le i\le k),
\end{equation*}
while the number of pebbles at every vertex outside
$V(P)\cup\{v_0\}$ is unchanged. Possible overlaps cause no difficulty: if
$a$ belongs to the path, its original pebble has been restored before the
moves along the path begin, while if $b$ belongs to the path, it is empty
when the last three moves begin. This also covers the possibility $a=b$.
If an auxiliary vertex belongs to another path in $\mc P$, its original
single pebble is restored, so that path is unaffected.

Before clearing the $i$th path, at least
\begin{equation*}
5(m-i+1)+1\ge6
\end{equation*}
pebbles remain at $v_0$, so all the displayed moves are applicable. Indeed,
the first two moves from $v_0$ use four of these pebbles, and the later move
$\pmove{v_0}{b}$ uses two more; the final move restores one pebble
to $v_0$. Repeating the procedure for $P_1,\dots,P_m$ clears every vertex
different from $v_0$ and leaves at least one pebble at $v_0$.
\end{proof}

\begin{theorem}\label{tm:ASH2PP}
Assume ASH, and let $G$ be a finite connected graph with at least two vertices.
Suppose that the following conditions hold:
\begin{enumerate}[(1)]
\item $G$ is strongly edge-triangulated.
\item If $X\subseteq V(G)$ and $|X|\le4$, then
$G[V(G)\setminus X]$ has a Hamiltonian path starting at each of its vertices.
\item If $X\subseteq V(G)$ and $|X|\ge5$, then
\begin{equation*}
\theta\bigl(G[V(G)\setminus X]\bigr)
\le \left\lfloor\frac{|X|}{5}\right\rfloor.
\end{equation*}
\end{enumerate}
Then $G$ is perfectly pebblable.
\end{theorem}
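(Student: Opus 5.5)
By Corollary~\ref{cr:perfect-ash}, and since condition (1) makes $G$ edge-triangulated, it suffices to show the following. Let $n=|V(G)|$. Every almost stacked configuration $c$ of size $n+1$ is stackable. Fix such a $c$, almost stacked at $v_0$.

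Put
\begin{equation*}
Z=\{v\ne v_0: c(v)=0\},
\qquad
X=Z\cup\{v_0\},
\qquad
z=|Z|.
\end{equation*}
Every vertex outside $X$ carries exactly one pebble. Hence
\begin{equation*}
c(v_0)=n+1-(n-1-z)=z+2.
\end{equation*}
Note that $|X|=z+1$. I split according to the size of $X$, which matches the two hypotheses (2) and (3).

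\textbf{Case $|X|\ge5$, i.e.\ $z\ge4$.} By (3), the occupied vertices $V(G)\setminus X$ are covered by $m\le\lfloor (z+1)/5\rfloor$ vertex-disjoint paths, all avoiding $v_0$. The configuration $c$ has exactly the form required in Lemma~\ref{lm:path-clearing}. It therefore suffices to check that
\begin{equation*}
z+2\ge 5m+1.
\end{equation*}
Since $5m\le z+1$, this holds. The lemma then shows that $c$ is stackable at $v_0$.

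\textbf{Case $|X|\le4$, i.e.\ $z\le3$.} Here $c(v_0)=z+2\in\{2,3,4,5\}$, too few pebbles for the path-clearing lemma, so I argue directly. The plan is to mimic the proof of Proposition~\ref{pr:perfect-Hamiltonian}: convert the pile at $v_0$ into a single pair sitting next to the occupied region, then sweep along a Hamiltonian path of $H=G[V(G)\setminus X]$, and finish near $v_0$.

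If $z=0$, take a Hamiltonian path of $G-v_0$ given by (2) with $X=\{v_0\}$. Let it start at $u_1$ and end at $u_k$. Choose a common neighbour $a$ of $v_0$ and $u_1$, and carry out the moves of the proof of Lemma~\ref{lm:path-clearing}. Two pebbles at $v_0$ suffice only if $v_0u_1$ is an edge, so I choose the start of the path to be a neighbour $u_1$ of $v_0$. The Hamiltonian path starting at each vertex allows this. Then I perform $\pmove{v_0}{u_1}$ and sweep along the path, leaving two pebbles at $u_k$. Finally I must dispose of them: with $b$ a common neighbour of $u_k$ and some already-emptied vertex, or simply $\pmove{u_k}{w}$ onto an empty neighbour $w$, the result is a single pebble, which is stacked. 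Since all other vertices are empty at that point, stacking is achieved.

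For $z=1,2,3$, the extra pebbles at $v_0$ are used as follows. Two of them start the sweep into $H$ from a neighbour of $v_0$ inside $H$. Any neighbour in $H$ works, because $H$ has a Hamiltonian path from each of its vertices. The sweep ends with two pebbles at the final vertex $u_k$, and one move sends a pebble to an empty vertex, which may be in $Z$ or may be $v_0$ once it has been emptied. The remaining $z$ pebbles at $v_0$ must then be absorbed. I would combine them using the triangle trick of Lemma~\ref{lm:edge-triangulated}: a pair at $v_0$ together with the final pebble can be merged onto a common neighbour. I would alternatively end the sweep by moving into $v_0$ itself, so that all surviving pebbles lie at $v_0$. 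The latter is cleanest: end the sweep with $\pmove{u_k}{b}$ followed by $\pmove{b}{v_0}$ after placing a pebble at $b$. Here $b$ is a common neighbour of $u_k$ and $v_0$.

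\textbf{Main obstacle.} The hardest step is the parity and fuel bookkeeping in the small case: ensuring that exactly the available $z+2$ pebbles suffice and that every auxiliary vertex used is empty or correctly restored. I would first try the uniform route "send $2$ pebbles from $v_0$ to a neighbour $u_1\in H$, sweep, then $\pmove{u_k}{w}$ with $w\in X$ empty, then pull everything to a common vertex." I expect this to require a short case check on whether $u_1$ can be taken adjacent to $v_0$. Possibly it also requires the strong edge-triangulation to route through a common neighbour when $v_0$ has no neighbour in $H$.
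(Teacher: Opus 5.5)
Your reduction via Corollary~\ref{cr:perfect-ash} is valid, your case $z\ge4$ is exactly the paper's Case~5, and your $z=0$ argument works (the pair at $u_k$ is already stacked). The small case, however, has a genuine gap at $z=1$, i.e.\ $c(v_0)=3$.

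After $\pmove{v_0}{u_1}$, only one pebble remains at $v_0$. Sweeping a Hamiltonian path $u_1,\dots,u_k$ of $H=G[V(G)\setminus X]$ then brings you to $2e_{u_k}\oplus e_{v_0}$. From there the only moves go from $u_k$ to a neighbour $w$, producing $e_w\oplus e_{v_0}$. So this configuration is stackable only if $u_kv_0\in E(G)$. Condition~(2) prescribes the first vertex of a Hamiltonian path, not the last, so you cannot force that adjacency. Your other endings also fail here:
\begin{itemize}
\item The closing maneuver $\pmove{v_0}{b},\pmove{u_k}{b},\pmove{b}{v_0}$ needs two pebbles still at $v_0$, i.e.\ $z\ge2$.
\item Sending the pair at $u_k$ to an empty vertex leaves two isolated single pebbles.
\end{itemize}

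The missing idea is not to exclude $v_0$ from the sweep. Take $u_1$ to be the second vertex of a Hamiltonian path of $G[V(G)\setminus Z]$ starting at $v_0$; this uses condition~(2) with $X=Z$, and if $v_0$ is the only occupied vertex, $c$ is already stacked. After $\pmove{v_0}{u_1}$, the vertex $v_0$ carries a single pebble like every other occupied vertex. You now have two pebbles at $u_1$ and one at every other vertex of $G[V(G)\setminus Z]$. Condition~(2) with $X=Z$ again gives a Hamiltonian path of this graph starting at $u_1$. It passes through $v_0$, and sweeping along it stacks the configuration. This is the paper's Case~2.

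The same choice of $u_1$ also settles your worry about $v_0$ having no neighbour in $H$: this never happens when $H\neq\varnothing$. That matters, because routing through a common neighbour would cost four pebbles you do not have. With that point fixed, your ending for $z=2,3$ is correct. After the sweep you hold $z$ pebbles at $v_0$ and two at $u_k$, and $\pmove{v_0}{b},\pmove{u_k}{b},\pmove{b}{v_0}$ leaves $z-1\ge1$ pebbles at $v_0$ and nothing elsewhere. This differs from the paper's Cases~3 and~4, which avoid common neighbours and use only condition~(2).
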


\begin{proof}
Let $n=|V(G)|$, and let $c\in\conf{G}{n+1}$ be almost stacked at $v_0$. Since
every vertex different from $v_0$ carries at most one pebble, we have
$c(v_0)\ge2$. Put
\begin{equation*}
Z=\{v\in V(G):c(v)=0\}.
\end{equation*}
Then $v_0\notin Z$, and
\begin{equation}\label{eq:perfect-main-pile}
n+1=\norm{c}
=c(v_0)+n-|Z|-1,
\end{equation}
so
\begin{equation*}
c(v_0)=|Z|+2.
\end{equation*}
We prove that $c$ is stackable.

\smallskip
\noindent\textbf{Case 1: $c(v_0)=2$.}
In this case $Z=\varnothing$. By condition~(2), $G$ has a Hamiltonian path
starting at $v_0$. Moving the pair of pebbles successively along this path
produces a stacked configuration.

\smallskip
\noindent\textbf{Case 2: $c(v_0)=3$.}
Now $|Z|=1$. If $v_0$ is the only occupied vertex, there is nothing to prove.
Otherwise, condition~(2) gives a Hamiltonian path in $G[V(G)\setminus Z]$
starting at $v_0$. Let $u$ be the second vertex of this path. After the move
$\pmove{v_0}{u}$, the resulting configuration has two pebbles at $u$ and one
pebble at every other vertex of $V(G)\setminus Z$. By condition~(2), this
induced graph has a Hamiltonian path starting at $u$, along which the
configuration can be stacked.

\smallskip
\noindent\textbf{Case 3: $c(v_0)=4$.}
Here $|Z|=2$. If $v_0$ is the only occupied vertex, the configuration is
already stacked. If there is exactly one occupied vertex $u\ne v_0$, then
condition~(2) implies that $u$ and $v_0$ are adjacent. The two moves
\begin{equation*}
\pmove{v_0}{u},\qquad \pmove{u}{v_0}
\end{equation*}
produce a configuration stacked at $v_0$; on the two occupied vertices their
net effect is
\begin{equation*}
4e_{v_0}\oplus e_u\longmapsto3e_{v_0}.
\end{equation*}

It remains to consider the case in which at least two vertices different from
$v_0$ are occupied. By condition~(2), take as $u$ the second vertex on a
Hamiltonian path in $G[V(G)\setminus Z]$ starting at $v_0$; then $u$ is an
occupied neighbor of $v_0$. Applying condition~(2) to $Z\cup\{u\}$, take as
$w$ the second vertex on a Hamiltonian path starting at $v_0$ in the
remaining induced graph; such a second vertex exists because another occupied
vertex besides $v_0$ remains. Thus $w$ is an occupied neighbor
\begin{equation*}
w\in V(G)\setminus\bigl(Z\cup\{u,v_0\}\bigr)
\end{equation*}
of $v_0$. Perform
\begin{equation*}
\pmove{v_0}{u},\qquad
\pmove{u}{v_0},\qquad
\pmove{v_0}{w}.
\end{equation*}
The resulting configuration has two pebbles at $w$ and one pebble at every
other vertex of
\begin{equation*}
V(G)\setminus\bigl(Z\cup\{u\}\bigr).
\end{equation*}
Condition~(2) supplies a Hamiltonian path in
$G[V(G)\setminus(Z\cup\{u\})]$ starting at $w$, so the configuration is
stackable.

\smallskip
\noindent\textbf{Case 4: $c(v_0)=5$.}
Now $|Z|=3$. Again there is nothing to prove if $v_0$ is the only occupied
vertex. Otherwise, let $u$ be the second vertex on a Hamiltonian path in
$G[V(G)\setminus Z]$ starting at $v_0$. Then $u$ is an occupied neighbor of
$v_0$. Perform
\begin{equation*}
\pmove{v_0}{u},\qquad
\pmove{v_0}{u},\qquad
\pmove{u}{v_0}.
\end{equation*}
On $v_0$ and $u$, the net effect is
\begin{equation*}
5e_{v_0}\oplus e_u\longmapsto2e_{v_0}\oplus e_u.
\end{equation*}
The resulting configuration has two pebbles at $v_0$ and one pebble at every
other vertex of $V(G)\setminus Z$. By condition~(2), a Hamiltonian path in
$G[V(G)\setminus Z]$ starting at $v_0$ stacks this configuration.

\smallskip
\noindent\textbf{Case 5: $c(v_0)\ge6$.}
By~\eqref{eq:perfect-main-pile}, $|Z|\ge4$. Apply condition~(3) to
\begin{equation*}
X=Z\cup\{v_0\},
\end{equation*}
whose size is $|Z|+1\ge5$. There is a path cover
\begin{equation*}
\mc P=\{P_1,\dots,P_m\}
\end{equation*}
of $G[V(G)\setminus X]$ such that
\begin{equation*}
m\le\left\lfloor\frac{|Z|+1}{5}\right\rfloor.
\end{equation*}
Consequently,
\begin{equation*}
c(v_0)=|Z|+2\ge5m+1.
\end{equation*}
Lemma~\ref{lm:path-clearing}, applied to this path cover, shows that $c$ is
stackable at $v_0$.

We have proved that every almost stacked configuration in
$\conf{G}{n+1}$ is stackable. Therefore ASH gives
\begin{equation*}
\stack{G}\le n+1.
\end{equation*}
The lower bound~\eqref{eq:stack-order-lower} gives $\stack{G}\ge n+1$.
Hence $\stack{G}=n+1$. Finally, condition~(1)
implies that $G$ is edge-triangulated, and
Lemma~\ref{lm:edge-triangulated} yields
\begin{equation*}
\clear{G}=\stack{G}=n+1.
\end{equation*}
Thus $G$ is perfectly pebblable.
\end{proof}

\subsection{Sparse constructions}

We conclude with two families of perfectly pebblable graphs whose edge
densities tend to zero.

\begin{definition}\label{df:sparse-perfect}
Fix an integer $k\ge2$, and let
\begin{equation*}
\{B_i:i\in\mathbb Z_{k^2}\}
\end{equation*}
be a family of pairwise disjoint sets, each of size five. For
$i,j\in\mathbb Z_{k^2}$, let
\begin{equation*}
\delta(i,j)=\min\{|i-j|,\,k^2-|i-j|\}
\end{equation*}
denote their cyclic distance, where the representatives of $i$ and $j$ are
taken from $\{0,\dots,k^2-1\}$.

Define $G_k$ to be the graph with
\begin{equation*}
V(G_k)=\bigcup_{i\in\mathbb Z_{k^2}}B_i
\end{equation*}
in which each $B_i$ induces a complete graph and, for distinct
$i,j\in\mathbb Z_{k^2}$, every vertex of $B_i$ is adjacent to every vertex of
$B_j$ if and only if
\begin{equation*}
1\le\delta(i,j)\le k-1
\qquad\text{or}\qquad
i\equiv j\pmod{k}.
\end{equation*}
\end{definition}

\begin{theorem}\label{tm:sparse-perfect}
Assume ASH\@. Then $G_k$ is perfectly pebblable for every integer $k\ge2$.
Moreover,
\begin{equation*}
|V(G_k)|=5k^2
\qquad\text{and}\qquad
|E(G_k)|=\frac{75}{2}k^3-\frac{55}{2}k^2.
\end{equation*}
In particular,
\begin{equation*}
|E(G_k)|=O(k^3)=O\bigl(|V(G_k)|^{3/2}\bigr).
\end{equation*}
\end{theorem}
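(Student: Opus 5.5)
We prove the statement by verifying conditions (1)--(3) of Theorem~\ref{tm:ASH2PP} for $G_k$ and then counting vertices and edges. Throughout, say that two blocks $B_i,B_j$ are \emph{adjacent} if they are joined completely. Since $k\ge2$, blocks with consecutive indices $i,i+1$ have cyclic distance $1\le k-1$. Hence the block graph contains the cyclic sequence $B_0,B_1,\dots,B_{k^2-1}$, and consecutive blocks are completely joined. Together with the fact that each block is a clique, this single observation drives conditions (2) and (3).

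\emph{Condition (1).} Let $x\ne y$ be two vertices.
\begin{enumerate}[(a)]
\item If $x,y\in B_i$, any third vertex of $B_i$ is a common neighbor, since $|B_i|=5$.
\item If $x\in B_i$ and $y\in B_j$ with $B_i,B_j$ adjacent, a second vertex of $B_i$ works.
\item Otherwise $i\not\equiv j\pmod k$, since blocks in the same residue class are adjacent. Choose $l\equiv i\pmod k$ whose cyclic distance to $j$ is minimal. The residue class of $i$ consists of $k$ points spaced $k$ apart on $\mathbb Z_{k^2}$, so $1\le\delta(l,j)\le\lfloor k/2\rfloor\le k-1$. Moreover $l\ne i$, because $\delta(i,j)\ge k$ in this case. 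Thus $B_l$ is adjacent to both $B_i$ and $B_j$, and any vertex of $B_l$ is a common neighbor.
\end{enumerate}
This case analysis is the only place where the ``$i\equiv j\pmod k$'' edges are needed, and I expect it to be the main point requiring care.

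\emph{Condition (2).} If $|X|\le4$, every block keeps at least one vertex. Start at any remaining vertex $v\in B_i$. Traverse the remaining vertices of $B_i$ beginning at $v$, which is possible because $B_i$ is a clique. Then traverse the remaining vertices of $B_{i+1},B_{i+2},\dots,B_{i-1}$ in cyclic order. Consecutive blocks are completely joined, so this gives a Hamiltonian path starting at $v$.

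\emph{Condition (3).} Let $|X|\ge5$, and let $t$ be the number of blocks entirely contained in $X$, so that $t\le\lfloor|X|/5\rfloor$.
\begin{enumerate}[(a)]
\item If $t=0$, the argument for condition (2) gives one Hamiltonian path, and $1\le\lfloor|X|/5\rfloor$.
\item If $t=k^2$, then $\theta=0$.
\item Otherwise, deleting the $t$ emptied blocks splits the cyclic block sequence into at most $t$ arcs of consecutive nonempty blocks. Each arc is covered by one path, as in condition (2). Hence $\theta(G-X)\le t\le\lfloor|X|/5\rfloor$.
\end{enumerate}
Theorem~\ref{tm:ASH2PP}, together with ASH, now yields perfect pebblability.

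\emph{Counting.} Clearly $|V(G_k)|=5k^2$. Each block contributes $\binom52=10$ internal edges. A block is adjacent to the $2(k-1)$ blocks at cyclic distance $1,\dots,k-1$; these are distinct because $k^2>2(k-1)$. It is also adjacent to the $k-1$ other blocks in its residue class mod $k$. These two sets are disjoint: if $j\equiv i\pmod k$ and $j\ne i$, then both $|i-j|$ and $k^2-|i-j|$ are nonzero multiples of $k$, so $\delta(i,j)\ge k$. Thus every block is adjacent to exactly $3(k-1)$ others. Each adjacent pair of blocks contributes $25$ edges, so
\[
|E(G_k)|=10k^2+\frac{25\cdot 3(k-1)k^2}{2}=\frac{75}{2}k^3-\frac{55}{2}k^2,
\]
which is $O(k^3)=O\bigl(|V(G_k)|^{3/2}\bigr)$.
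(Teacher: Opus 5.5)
Your proof is correct and follows the paper's argument essentially step by step: you verify conditions (1)--(3) of Theorem~\ref{tm:ASH2PP} using the block cliques and the complete joins between cyclically consecutive blocks, and then you count adjacent block pairs. The only cosmetic difference is in condition (1). The paper avoids your case split by always taking the intermediate block $B_{i+r}$ with $0\le r\le k-1$ and $r\equiv j-i\pmod k$, which lies in $j$'s residue class within distance $k-1$ of $i$. You instead take the member of $i$'s residue class nearest to $j$, and both choices work.
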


\begin{proof}
Fix $k\ge2$. We verify the three conditions of
Theorem~\ref{tm:ASH2PP}. The graph $G_k$ is finite and connected since
consecutive blocks are completely joined.

First, let $x\in B_i$ and $y\in B_j$ be distinct vertices. Choose
$r\in\{0,\dots,k-1\}$ such that
\begin{equation*}
r\equiv j-i\pmod{k},
\end{equation*}
and put $\ell=i+r$ in $\mathbb Z_{k^2}$. Then
\begin{equation*}
\delta(i,\ell)=r\le k-1
\qquad\text{and}\qquad
\ell\equiv j\pmod{k}.
\end{equation*}
Here $\delta(i,\ell)=r$ because $0\le r\le k-1<k^2/2$.
Choose
\begin{equation*}
z\in B_\ell\setminus\{x,y\},
\end{equation*}
which is possible because $|B_\ell|=5$. If $r=0$, then $x$ and $z$ lie in
the same complete block; otherwise, $1\le\delta(i,\ell)\le k-1$, so they are
adjacent. Also, $y$ and $z$ either lie in the same complete block or lie in
blocks whose indices are congruent modulo $k$. Thus $z$ is adjacent to both
$x$ and $y$. Hence $G_k$ is strongly edge-triangulated, and condition~(1) of
Theorem~\ref{tm:ASH2PP} holds.

Next, let $X\subseteq V(G_k)$ with $|X|\le4$. Since every block has five
vertices,
\begin{equation*}
B_i\setminus X\ne\varnothing
\qquad\text{for every }i\in\mathbb Z_{k^2}.
\end{equation*}
List the vertices of each $B_i\setminus X$ in an arbitrary order, and
concatenate these lists in the cyclic order
\begin{equation*}
B_0,B_1,\dots,B_{k^2-1}.
\end{equation*}
Each block induces a complete graph, and consecutive blocks, including
$B_{k^2-1}$ and $B_0$, are completely joined. The resulting cyclic list is
therefore a Hamiltonian cycle in $G_k-X$. Breaking this cycle immediately
before any prescribed vertex gives a Hamiltonian path starting at that
vertex. Thus condition~(2) holds.

Finally, let $X\subseteq V(G_k)$ with $|X|\ge5$, and set
\begin{equation*}
I=\{i\in\mathbb Z_{k^2}:B_i\subseteq X\}.
\end{equation*}
If $I=\varnothing$, the preceding cyclic construction gives a Hamiltonian
cycle in $G_k-X$. Hence
\begin{equation*}
\theta(G_k-X)=1\le\left\lfloor\frac{|X|}{5}\right\rfloor.
\end{equation*}

Suppose that $I\ne\varnothing$. The set
$\mathbb Z_{k^2}\setminus I$ is the union of at most $|I|$ maximal cyclic
intervals of consecutive indices. For each such interval, list all vertices
of $B_i\setminus X$ block by block in cyclic order. This gives one path since
each nonempty block is complete and consecutive blocks are completely joined.
The resulting paths are pairwise vertex-disjoint and cover every vertex of
$G_k-X$. If every block is contained in $X$, this is the empty path cover.
Therefore
\begin{equation*}
\theta(G_k-X)\le |I|.
\end{equation*}
The blocks indexed by $I$ are pairwise disjoint and each contains five
vertices of $X$, so
\begin{equation*}
5|I|\le |X|.
\end{equation*}
Consequently,
\begin{equation*}
\theta(G_k-X)
\le |I|
\le\left\lfloor\frac{|X|}{5}\right\rfloor,
\end{equation*}
which proves condition~(3). Theorem~\ref{tm:ASH2PP} now implies that $G_k$ is
perfectly pebblable.

It remains to count the vertices and edges. There are $k^2$ blocks of size
five, so
\begin{equation*}
|V(G_k)|=5k^2.
\end{equation*}
The edges inside the blocks contribute
\begin{equation*}
k^2\binom{5}{2}=10k^2
\end{equation*}
edges. Each index has $2(k-1)$ other indices $j$ with
\begin{equation*}
1\le\delta(i,j)\le k-1,
\end{equation*}
so there are $k^2(k-1)$ unordered pairs $\{i,j\}$ of this kind. Each such
pair contributes $25$ edges between its blocks. There are also
$k$ congruence classes modulo $k$, each containing $k$ indices, so the
congruence condition contributes
\begin{equation*}
k\binom{k}{2}
\end{equation*}
further pairs of blocks. These two families of block pairs are disjoint:
distinct indices congruent modulo $k$ have cyclic distance at least $k$.
Hence
\begin{align*}
|E(G_k)|
&=10k^2+25k^2(k-1)+25k\binom{k}{2}\\
&=\frac{75}{2}k^3-\frac{55}{2}k^2.
\end{align*}
The asserted asymptotic estimate follows immediately. More explicitly, the
edge density satisfies
\begin{equation*}
\frac{|E(G_k)|}{\binom{|V(G_k)|}{2}}
=\frac{75k^3-55k^2}{5k^2(5k^2-1)}
\sim\frac{3}{k},
\end{equation*}
and hence tends to zero.
\end{proof}

\begin{definition}\label{df:wheel-perfect}
Fix an integer $k\ge3$. Let
\begin{equation*}
B_*\quad\text{and}\quad \{B_i:i\in\mathbb Z_k\}
\end{equation*}
be pairwise disjoint sets of size five. Define $W_k$ to be the graph with
\begin{equation*}
V(W_k)=B_*\cup\bigcup_{i\in\mathbb Z_k}B_i
\end{equation*}
such that each of the sets $B_*,B_0,\dots,B_{k-1}$ induces a complete graph,
$B_*$ is completely joined to every $B_i$, and $B_i$ is completely joined to
$B_{i+1}$ for every $i\in\mathbb Z_k$. There are no other edges.
\end{definition}

Figure~\ref{fig:wheel-perfect} illustrates this block construction.

\begin{figure}[ht]
\centering
\begingroup
\newcommand{\wheelblock}[3]{%
  \begin{scope}[shift={(#1,#2)}]
    \filldraw[fill=white,draw=black,line width=0.7pt] (0,0) circle (0.62);
    \coordinate (p0) at (90:0.28);
    \coordinate (p1) at (18:0.28);
    \coordinate (p2) at (-54:0.28);
    \coordinate (p3) at (-126:0.28);
    \coordinate (p4) at (162:0.28);
    \draw[black!55,line width=0.25pt]
      (p0)--(p1) (p0)--(p2) (p0)--(p3) (p0)--(p4)
      (p1)--(p2) (p1)--(p3) (p1)--(p4)
      (p2)--(p3) (p2)--(p4) (p3)--(p4);
    \foreach \j in {0,...,4}
      \fill (p\j) circle (1.15pt);
    \node[font=\scriptsize,fill=white,inner sep=0.6pt] at (0,-0.42)
      {$B_{#3}$};
  \end{scope}%
}
\begin{tikzpicture}
  \coordinate (c) at (0,0);
  \coordinate (b0) at (0,3);
  \coordinate (b1) at (2.6,1.5);
  \coordinate (b2) at (2.6,-1.5);
  \coordinate (b3) at (0,-3);
  \coordinate (b4) at (-2.6,-1.5);
  \coordinate (b5) at (-2.6,1.5);

  \draw[black!60,line width=1.35pt]
    (b0)--(b1)--(b2)--(b3)--(b4)--(b5)--cycle;
  \foreach \j in {0,...,5}
    \draw[black!60,line width=1.35pt] (c)--(b\j);

  \wheelblock{0}{0}{*}
  \wheelblock{0}{3}{0}
  \wheelblock{2.6}{1.5}{1}
  \wheelblock{2.6}{-1.5}{2}
  \wheelblock{0}{-3}{3}
  \wheelblock{-2.6}{-1.5}{4}
  \wheelblock{-2.6}{1.5}{5}
\end{tikzpicture}
\endgroup
\caption{The block structure of \(W_k\), illustrated for \(k=6\). Each
circular block induces a copy of \(K_5\), and each heavy line represents the
complete join between the two corresponding blocks. Thus the graph obtained
by contracting every block is a wheel.}
\label{fig:wheel-perfect}
\end{figure}

\begin{theorem}\label{tm:wheel-perfect}
Assume ASH\@. Then $W_k$ is perfectly pebblable for every integer $k\ge3$.
Moreover,
\begin{equation*}
|V(W_k)|=5(k+1)
\qquad\text{and}\qquad
|E(W_k)|=60k+10.
\end{equation*}
In particular,
\begin{equation*}
|E(W_k)|=O(k)=O\bigl(|V(W_k)|\bigr).
\end{equation*}
\end{theorem}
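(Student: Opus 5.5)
The plan mirrors the proof of Theorem~\ref{tm:sparse-perfect}: verify the three conditions of Theorem~\ref{tm:ASH2PP} for $W_k$, and then count vertices and edges. $W_k$ is finite and connected because $B_*$ is joined to every block.

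\textbf{Condition~(1).} Let $x,y$ be distinct vertices.
\begin{itemize}
\item[] If neither lies in $B_*$, pick $z\in B_*$. Since $B_*$ is completely joined to every $B_i$, $z$ is adjacent to both.
\item[] If $x\in B_*$ and $y\in B_i$, pick $z\in B_i\setminus\{y\}$. It is adjacent to $y$ inside the clique $B_i$ and to $x$ via the join; alternatively any $z\in B_*\setminus\{x\}$ works.
\item[] If both lie in $B_*$, take $z\in B_*\setminus\{x,y\}$, which exists since $|B_*|=5$.
\end{itemize}

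\textbf{Condition~(2).} Let $|X|\le4$, so each of the $k+1$ blocks keeps at least one vertex. List the surviving vertices of $B_0,B_1,\dots,B_{k-1}$ block by block, and close the cycle through the surviving vertices of $B_*$. Every block is a clique, consecutive rim blocks are joined, and $B_*$ is joined to both $B_{k-1}$ and $B_0$. This gives a Hamiltonian cycle of $W_k-X$, and breaking it before any prescribed vertex gives the required Hamiltonian path.

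\textbf{Condition~(3).} This is the step needing care, because deleting $B_*$ destroys the hub. Let $|X|\ge5$ and $I=\{i\in\mathbb Z_k: B_i\subseteq X\}$.
\begin{itemize}
\item[] If $B_*\not\subseteq X$, first cover the rim: if $I=\varnothing$ the rim survivors form a single cycle, hence one path; otherwise $\mathbb Z_k\setminus I$ splits into at most $|I|$ cyclic intervals, each giving one path. The surviving hub vertices are then inserted between two consecutive rim paths, or appended at an end, since they are adjacent to everything. This joins two paths into one, or at least adds no new path. So $\theta\le\max(1,|I|)$. Now $|I|\le\lfloor|X|/5\rfloor$ because the blocks indexed by $I$ are disjoint 5-subsets of $X$, and $1\le\lfloor|X|/5\rfloor$ since $|X|\ge5$.
\item[] If $B_*\subseteq X$, the rim survivors form at most $\max(1,|I|)$ paths, and if $I=\mathbb Z_k$ the cover is empty. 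The count is now $5(|I|+1)\le|X|$, so $\max(1,|I|)\le|I|+1\le\lfloor|X|/5\rfloor$.
\end{itemize}
The only subtlety is the case $I=\varnothing$, where the bound needed is $1$; this holds because $|X|\ge5$. Theorem~\ref{tm:ASH2PP} then gives perfect pebblability.

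\textbf{Counting.}
\begin{itemize}
\item[] Vertices: there are $k+1$ blocks of size five, so $|V(W_k)|=5(k+1)$.
\item[] Edges: the $k+1$ cliques contribute $10(k+1)$ edges. There are $k$ hub--rim joins and, for $k\ge3$, $k$ distinct rim joins $B_iB_{i+1}$; these $2k$ block pairs contribute $25$ edges each, giving $50k$.
\end{itemize}
The total is $10k+10+50k=60k+10$, which is $O(k)=O(|V(W_k)|)$.
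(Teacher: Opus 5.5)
Your proposal is correct and follows essentially the same route as the paper. You verify the three conditions of Theorem~\ref{tm:ASH2PP} using the same common-neighbor choices, the same block-by-block Hamiltonian cycle, and the same decomposition of the surviving rim into cyclic intervals; your split on whether $B_*\subseteq X$ plays the role of the paper's count $q$ of deleted blocks, and your vertex and edge counts agree with the paper's.

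One small gap: when $I=\mathbb Z_k$ but $B_*\not\subseteq X$, there is no rim path to append the hub vertices to. In that case the surviving hub vertices form a single path by themselves, so your bound $\theta\le\max(1,|I|)$ still holds.
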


\begin{proof}
Fix $k\ge3$. We verify the conditions of Theorem~\ref{tm:ASH2PP}. The graph
$W_k$ is finite and connected because the central block $B_*$ is completely
joined to every rim block.

To verify condition~(1), let $x$ and $y$ be distinct vertices. If they belong
to the same block, any third vertex of that block is a common neighbor. If
they belong to distinct rim blocks, any vertex of $B_*$ is a common neighbor.
Finally, if one belongs to $B_*$ and the other to a rim block $B_i$, any
other vertex of $B_i$ is a common neighbor. Thus $W_k$ is
strongly edge-triangulated.

Let $X\subseteq V(W_k)$ with $|X|\le4$. Every block contains a vertex outside
$X$. List the vertices of each block outside $X$ in an arbitrary order, and
concatenate the lists in the order
\begin{equation*}
B_*,B_0,B_1,\dots,B_{k-1}.
\end{equation*}
Consecutive blocks in this list are completely joined, as are $B_{k-1}$ and
$B_*$. Hence the resulting cyclic list is a Hamiltonian cycle in $W_k-X$.
Breaking it immediately before any prescribed vertex gives a Hamiltonian path
starting at that vertex. This proves condition~(2).

Now let $X\subseteq V(W_k)$ with $|X|\ge5$, and let
\begin{equation*}
I=\{i\in\mathbb Z_k:B_i\subseteq X\}.
\end{equation*}
We also record whether $B_*\subseteq X$. Let $q$ be the total number of blocks
contained in $X$. Since these blocks are pairwise disjoint and have size five,
\begin{equation}\label{eq:wheel-deleted-blocks}
q\le\left\lfloor\frac{|X|}{5}\right\rfloor.
\end{equation}
If $q=0$, the preceding construction gives a Hamiltonian cycle in $W_k-X$, so
\begin{equation*}
\theta(W_k-X)=1\le\left\lfloor\frac{|X|}{5}\right\rfloor.
\end{equation*}

Suppose that $q\ge1$. If $I=\varnothing$, then $B_*\subseteq X$, and the
surviving rim blocks form a Hamiltonian cycle. Thus $\theta(W_k-X)=1\le q$.
Assume that $I\ne\varnothing$. The set $\mathbb Z_k\setminus I$ is the union
of at most $|I|$ maximal cyclic intervals of consecutive indices. Listing the
surviving vertices block by block along each interval gives at most $|I|$
pairwise vertex-disjoint paths covering all surviving rim vertices. If
$B_*\setminus X\ne\varnothing$ and at least one rim path is present, append
all vertices of $B_*\setminus X$, in an arbitrary order, to one of those
paths. If no rim vertex
survives but $B_*\setminus X\ne\varnothing$, the vertices of
$B_*\setminus X$ form one path by themselves. If no vertex survives, the
empty family is a path cover.
Consequently,
\begin{equation*}
\theta(W_k-X)\le q.
\end{equation*}
Together with~\eqref{eq:wheel-deleted-blocks}, this proves condition~(3).
Theorem~\ref{tm:ASH2PP} now implies that $W_k$ is perfectly pebblable.

There are $k+1$ blocks, each containing five vertices, and hence
\begin{equation*}
|V(W_k)|=5(k+1).
\end{equation*}
The edges within the blocks contribute
\begin{equation*}
(k+1)\binom{5}{2}=10(k+1)
\end{equation*}
edges. The underlying wheel has $k$ rim edges and $k$ edges from its center to
the rim. Each of these $2k$ edges corresponds to a complete join between two
blocks and therefore contributes $25$ edges. It follows that
\begin{equation*}
|E(W_k)|=10(k+1)+25(2k)=60k+10.
\end{equation*}
The asserted asymptotic estimate is immediate. More explicitly,
\begin{equation*}
\frac{|E(W_k)|}{\binom{|V(W_k)|}{2}}
=\frac{120k+20}{5(k+1)(5k+4)}
\sim\frac{24}{5k},
\end{equation*}
so this edge density also tends to zero.
\end{proof}

\subsection{Open problems}

These observations suggest several natural problems. In particular, one may
ask for a structural characterization of perfectly pebblable graphs, for the
complexity of recognizing them, and for the threshold function for perfect
pebblability in the random graph \(G(n,p)\).

\section*{Declaration of Generative AI and AI-assisted technologies in the
writing process}

During the preparation of this work, the authors used ChatGPT to correct
grammatical and typographical errors and misprints, to assist in checking
mathematical arguments, and to improve the presentation of the manuscript.
All mathematical results and constructions are due to the authors. After
using this tool, the authors reviewed and edited the content as needed and
take full responsibility for the content of the publication.

\end{document}